\numberwithin{equation}{section}
\numberwithin{figure}{section}
\numberwithin{table}{section}
\title{A Conjecture of Warnaar-Zudilin from Deformations of Lie Superalgebras}
\author{Thomas Creutzig$^1$}
\address{${}^1$Department of Mathematics\\ FAU Erlangen\\ 91058 Erlangen, Germany}
\email[T.~Creutzig]{creutzig@ualberta.ca}
\author{Niklas Garner$^2$}
\address{${}^2$Mathematical Institute\\ University of Oxford\\ Oxford, OX2 6GG, UK}
\email[N.~Garner]{niklas.garner@maths.ox.ac.uk}
\begin{document}
	
\begin{abstract}
	We prove a collection of $q$-series identities conjectured by Warnaar and Zudilin and appearing in recent work with H. Kim in the context of superconformal field theory.
	Our proof utilizes a deformation of the simple affine vertex operator superalgebra $L_k(\mathfrak{osp}_{1|2n})$ into the principal subsuperspace of $L_k(\mathfrak{sl}_{1|2n+1})$ in a manner analogous to earlier work of Feigin-Stoyanovsky.
	This result fills a gap left by Stoyanovsky, showing that for all positive integers $N$, $k$ the character of the principal subspace of type $A_N$ at level $k$ can be identified with the (super)character of a simple affine vertex operator (super)algebra at the same level.
\end{abstract}

\maketitle
\tableofcontents

\section{Introduction}

Vertex operator algebras (VOAs) appear in a wide variety of physical problems and are a rich source of mathematics that serve as a fertile bridge between algebra and number theory.
In this paper, we capitalize on insight gleaned from a recent physical analysis \cite{CGK} with H. Kim to establish a family of $q$-series identities of interest to number theorists.
\begin{thm}[Main result, Corollary \ref{cor:characters}]
	For every positive integer $n, k$ the following is an equality of $q$-series: 
	\begin{equation}
	\label{eq:charactersintro}
	\frac{1}{(q)_\infty^{n(2n-1)}}\sum_{u \in \Z^n}(-1)^{|u|} \xi(u) q^{(k+n+\scriptstyle{\frac{1}{2}})|\!|u|\!|^2+ \rho \cdot u} = \sum_{m \in \mathbb{N}^{k(2n-1)}} \frac{q^{\scriptstyle{\frac{1}{2}}\sum\limits_{i,j=1}^{k}\sum\limits_{a,b=1}^{2n-1} T_{ij} A_{ab} m_{ia}m_{jb}}}{\prod\limits_{i=1}^k \prod\limits_{a=1}^{2n-1} (q)_{m_{ia}}}
	\end{equation}
\end{thm}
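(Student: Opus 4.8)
The plan is to identify each side of \eqref{eq:charactersintro} with the (super)character of an explicit vertex-algebraic object and then to match these two objects through a flat one-parameter deformation, in the spirit of Feigin--Stoyanovsky.

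First I would show that the left-hand side is the supercharacter of the simple affine vertex operator superalgebra $L_k(\mathfrak{osp}_{1|2n})$. The prefactor $(q)_\infty^{-n(2n-1)}$ together with the lattice sum over $u \in \Z^n$, weighted by the sign $(-1)^{|u|}$, the factor $\xi(u)$ (which I expect encodes the odd-root contribution), and the quadratic exponent $(k+n+\tfrac12)\|u\|^2 + \rho \cdot u$, is exactly the shape of a Weyl--Kac numerator over a super denominator. Concretely I would combine the super denominator identity for $\mathfrak{osp}_{1|2n}$ with the character formula for the level-$k$ vacuum module, specialize the principal grading, and collect the even- and odd-root factors; the shift by $\rho$ and the shifted level $k + n + \tfrac12$ (with $n + \tfrac12$ the dual Coxeter number of $\mathfrak{osp}_{1|2n}$) then emerge from the standard normalization.

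Next I would recognize the right-hand side as the graded dimension of a principal subsuperspace. The fermionic sum over $m \in \mathbb{N}^{k(2n-1)}$ is precisely a Feigin--Stoyanovsky--Georgiev presentation, in which $A_{ab}$ is the Cartan matrix of type $A_{2n-1}$ acting on the $2n-1$ colors and $T_{ij}$ is the level (``tadpole'') matrix acting on the $k$ copies. This is the character of the principal subspace $W_k$ of the level-$k$ affine algebra of type $A_{2n-1}$, which one presents by generators and relations and whose graded dimension is computed via a Poincar\'e--Birkhoff--Witt-type monomial basis.

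The heart of the matter---and the step I expect to be the main obstacle---is proving that these two characters coincide. Following the strategy announced in the abstract, I would realize $L_k(\mathfrak{osp}_{1|2n})$ as the generic fibre of a flat family of vertex operator superalgebras whose special fibre is the principal subsuperspace of $L_k(\mathfrak{sl}_{1|2n+1})$, using the embedding $\mathfrak{osp}_{1|2n} \hookrightarrow \mathfrak{sl}_{1|2n+1}$ and an abelianizing degeneration of the currents attached to the distinguished generators. Two points then require genuine work: (i) establishing flatness, so that the $q$-graded superdimension is constant along the family and the generic and special fibres share a character---this demands controlling the associated graded and ruling out any collapse of graded dimensions, via a Poincar\'e--Birkhoff--Witt argument adapted to the $\Z_2$-graded setting; and (ii) matching the presentation of the degenerate principal subsuperspace with the type $A_{2n-1}$ data of the previous step, so that its character is the fermionic sum on the right. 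The manifest positivity of that sum, set against the signed theta form on the left, is then accounted for by the monomial basis of the special fibre, in which the would-be fermionic signs are absent. Combining the three steps yields the identity \eqref{eq:charactersintro}.
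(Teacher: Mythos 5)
You follow the same overall route as the paper: the left-hand side is the specialized Kac--Wakimoto supercharacter of $L_k(\mathfrak{osp}_{1|2n})$, the right-hand side is Georgiev's fermionic character of a type $A$ principal subspace, and the bridge is a deformation of $\mathfrak{osp}_{1|2n}$ into a nilpotent subalgebra of $\mathfrak{sl}_{1|2n+1}$. However, at exactly the two places you flag as requiring ``genuine work'' there are concrete gaps, and the missing ingredients are the paper's two technical theorems.

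First, the signs. The special fibre is the principal sub\emph{super}space of $L_k(\mathfrak{sl}_{1|2n+1})$: its generating nilpotent subalgebra contains the odd vectors $\psi_1,\dots,\psi_{2n}$, so any PBW-type monomial basis contains odd monomials and the supercharacter of the special fibre is a genuinely signed series. Your assertion that in the monomial basis of the special fibre ``the would-be fermionic signs are absent'' is false as stated, and it cannot be repaired inside a basis argument: one must exhibit a cancellation. The paper does this with a Duflo--Serganova-type reduction (Section \ref{sec:DufloSerganova}): a $\mathfrak{gl}_{1|1}$ embedded in $\mathfrak{sl}_{1|2n+1}$ acts on the principal subsuperspace, every summand with nonzero $E$-eigenvalue has superdimension zero (Lemma \ref{lem:coho}), and the surviving $E$-weight-zero part (the $\psi^+$-cohomology) is precisely the principal subspace of $L_k(\mathfrak{sl}_{2n})$. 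This is what equates the signed supercharacter with the manifestly positive sum; note the right-hand side of \eqref{eq:charactersintro} is the character of the principal subspace of $L_k(\mathfrak{sl}_{2n})$, not of the superspace, so without this reduction your step (ii) compares against the wrong object.

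Second, ``flatness''. A PBW argument gives $\epsilon$-independence only of the characters of the universal objects $V^k(\mathfrak{k}_\epsilon)$; the danger sits entirely in the maximal ideals $I_\epsilon$, whose graded dimensions could jump at $\epsilon=0$ (the paper itself notes that not every element of $I_0$ need arise as a limit from $I_\epsilon$). The paper does not prove abstract flatness: by Corollary \ref{cor:supercharacters} it suffices to deform every element of the $E$-weight-zero part of $I_0$, and that part is controlled by an explicit presentation --- the defining ideal of the principal subspace of $L_k(\mathfrak{sl}_{2n})$ is generated by the modes of the fields $e_i(z)^{k+1}$. This presentation (Theorem \ref{thm:presentation}) was previously known only for $\mathfrak{sl}_2$ and $\mathfrak{sl}_3$ and is proved for all $\mathfrak{sl}_N$ in Appendix \ref{app:presentation} via quasiparticle bases following Georgiev and Butorac--Ko\v{z}i\'{c}; the generators are then deformed by hand, $e_{i,\epsilon}(z)=e_i(z)-\epsilon^2 f_{i+1}(z)$, and remain in the ideal because $e_{i,\epsilon}$ is nilpotent. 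Without this generation statement (or equivalent control of the ideal), your step (i) has no proof: ``ruling out collapse of graded dimensions via PBW'' never touches the ideal. A smaller but real further point: you also need the generic fibre to be the \emph{simple} quotient $L_k(\mathfrak{osp}_{1|2n})$, which the paper obtains from Lemma \ref{lem:sln} together with a theorem of Gorelik--Serganova (Corollary \ref{cor:osp}).
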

In this theorem, the matrices $A_{ab}$ and $T_{ij}$ are simply the Cartan matrix of $A_{2n-1}$ and the inverse of the Cartan matrix of the rank $k$ tadpole graph
\[
A_{ab} = \begin{cases}
	2 & a = b\\
	-1 & a = b \pm 1\\
	0 & \text{else}
\end{cases} \qquad \text{and} \qquad T_{ij} = \min(i,j)\,,
\]
$|u|$ denotes the sum of the components of $u$, $|\!|u|\!|^2 = u \cdot u$ the squared norm of $u$, $\rho$ the vector with components $\rho_m = m - \frac{1}{2}$, and finally
\[
\xi(u) = \prod_{1 \leq l < m \leq n} \frac{v_m^2 - v_l^2}{\rho_m^2 - \rho_l^2} \qquad v_m = \rho_m + (2(n+k)+1)u_m\,.
\]
This collection of $q$-series identities was proved for $k = 1$ and conjectured to hold more generally by Warnaar and Zudilin \cite[Conjecture 1.1, Theorem 1.2]{WarnaarZudilin} and were viewed by these authors as an extension of the Rogers-Ramanujan \cite{Rogers, Andrews84} (for $n = k = 1$) and Andrews-Gordon \cite{Gordon61,Andrews74} (for $n = 1$) identities and a specialization of the $A^{(2)}_{2n}$ Macdonald identities \cite{Macdonald71} (for $k = 0$, where the right-hand side reduces to $1$).%
\footnote{We note that the parameter $k$ in \cite{WarnaarZudilin} differs from our $k$ by 1: $k_{WZ} = k + 1$.} %

We instead view the $q$-series appearing on the two sides of the Eq. \eqref{eq:charactersintro} as supercharacters of VOAs.
The left-hand side of Eq. \eqref{eq:charactersintro} is the supercharacter of the simple affine VOA $L_k(\fosp_{1|2n})$; see Appendix \ref{app:osp} for more details.
The right-hand side of Eq. \eqref{eq:charactersintro} is the character of a different VOA: as conjectured (and proven for $\fsl_2$) by Feigin-Stoyanovsky \cite{StoyanovskyFeigin} and proven more generally by Georgiev \cite{Georgiev}, it is the character of the principal subspace of $L_k(\fsl_{2n})$; we build on these results to show that it is also the supercharacter of the principal subspace of $L_k(\fsl_{1|2n+1})$.
An analogous family of $q$-series identities appeared in work of Stoyanovsky \cite{Stoyanovsky1998} (where the left-hand side is replaced by the character of the simple affine VOA $L_k(\fsp_{2n})$ and the right-hand side is replaced by the character of the principal subspace of $L_k(\fsl_{2n+1})$) by showing that $\fsp_{2n}$ can be deformed into a nilpotent subalgebra of $\fsl_{2n+1}$, that this deformation can be extended to the affine setting, and that its character does not change under this deformation.
We establish Eq. \eqref{eq:charactersintro} by similarly deforming $\fosp_{1|2n}$ into a nilpotent subalgebra of $\fsl_{1|2n+1}$.
The equality of the $q$-series in Eq. \eqref{eq:charactersintro} is thus an immediate consequence of the following theorem:
\begin{thm}[Theorem \ref{thm:characters}]
	The supercharacter of $L_k(\fosp_{1|2n})$ is equal to both the supercharacter of the principal subspace of $L_k(\fsl_{1|2n+1})$ and to the character of the principal subspace of $L_k(\fsl_{2n})$.
\end{thm}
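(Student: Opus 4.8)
The statement splits into two equalities: (i) the supercharacter of $L_k(\fosp_{1|2n})$ equals that of the principal subsuperspace $W$ of $L_k(\fsl_{1|2n+1})$, and (ii) the supercharacter of $W$ equals the character of the principal subspace $W'$ of $L_k(\fsl_{2n})$. The plan is to prove (i) by a degeneration argument modeled on Stoyanovsky's treatment of $\fsp_{2n}$ inside $\fsl_{2n+1}$ \cite{Stoyanovsky1998} and the functional-model philosophy of Feigin--Stoyanovsky \cite{StoyanovskyFeigin}, and to prove (ii) by comparing the two principal subspaces directly and invoking Georgiev's formula \cite{Georgiev} for $\operatorname{ch} W'$. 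Throughout, the natural invariant is the \emph{super}character $\operatorname{str} q^{L_0} = \operatorname{tr}\,(-1)^{F} q^{L_0}$: its fermionic signs are precisely the alternating signs $(-1)^{|u|}$ on the left-hand side of \eqref{eq:charactersintro}, and, being a signed graded dimension, it is the quantity one can hope to hold fixed along a parity-preserving deformation.

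For (i) I would first carry out the finite-dimensional step: exhibit a one-parameter contraction $\mathfrak{g}_\epsilon$, built from a suitable grading of $\fosp_{1|2n}$, with $\mathfrak{g}_1 \cong \fosp_{1|2n}$ and nilpotent limit $\mathfrak{g}_0 \cong \mathfrak{n}$, where $\mathfrak{n} \subset \fsl_{1|2n+1}$ is the nilpotent subalgebra generating $W$. A dimension count shows that the even part of $\fosp_{1|2n}$ is $\fsp_{2n}$ and the even part of $\mathfrak{n}$ is the nilradical of the $\fsl_{2n+1}$ block of $\fsl_{1|2n+1}$, both of dimension $n(2n+1)$; on even parts the contraction is therefore exactly Stoyanovsky's, and the new ingredient is to match the $2n$ odd root vectors of $\fosp_{1|2n}$ with a $2n$-dimensional odd subspace of the $(2n+1)$-dimensional odd part of $\mathfrak{n}$, keeping track of parities and structure constants. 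I would then extend the grading to the current algebra at level $k$, where it is compatible with the conformal grading by $L_0$ and with the weight grading, and argue that the resulting interpolating family of graded vertex superalgebras has $\epsilon$-independent supercharacter, giving $\operatorname{sch} L_k(\fosp_{1|2n}) = \operatorname{sch} W$.

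For (ii) I would place $W$ and $W'$ side by side through their presentations by currents. The principal subspace $W'$ is governed by the Cartan matrix $A$ of $A_{2n-1}$ and the tadpole matrix $T = \min(i,j)$, and Georgiev's theorem \cite{Georgiev} identifies $\operatorname{ch} W'$ with the right-hand side of \eqref{eq:charactersintro}. By contrast $W$ is generated by even currents spanning the nilradical of the $\fsl_{2n+1}$ block together with the odd simple-root currents of $\fsl_{1|2n+1}$, whose modes anticommute and square to zero. The goal is to show that once the $(-1)^{F}$ signs are inserted the surplus even degrees of freedom and the fermionic ones cancel in pairs, collapsing the $\fsl_{2n+1}$-type ($A_{2n}$) data down to the $A_{2n-1}$ quadratic form and yielding $\operatorname{sch} W = \operatorname{ch} W'$. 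I expect this to follow either from a parity-respecting, grading-preserving identification of $W$ with $W'$, or from a direct manipulation of the corresponding fermionic sums.

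The principal difficulty lies in the degeneration of step (i). Passing to the simple quotient $L_k$ and imposing the level-$k$ current relations that cut out $W$ are both operations that need not vary continuously in $\epsilon$: the maximal submodule and the ideal of relations can a priori jump as $\epsilon \to 0$, so that individual graded dimensions are not preserved. The heart of the argument is to show that the family is nonetheless flat at the level of the \emph{signed} graded dimension --- equivalently, that any such jump occurs in cancelling pairs of opposite fermion parity and equal conformal weight, leaving the supercharacter untouched. Establishing this, together with the explicit super degeneration of $\fosp_{1|2n}$ onto $\mathfrak{n} \subset \fsl_{1|2n+1}$, for which there is no counterpart in the existing literature on principal subspaces, is where the real work of the proof is concentrated.
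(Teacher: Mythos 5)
Your overall shape --- a Stoyanovsky-style contraction plus a signed cancellation of the fermionic directions --- matches the paper's strategy, and you have correctly located the hard point: the maximal ideal and the ideal of relations can jump as $\epsilon \to 0$, so graded dimensions alone need not be preserved. But your proposal stops exactly there, on both fronts, and the two mechanisms the paper supplies are precisely what is missing. First, the ``cancellation in pairs'' you hope for in step (ii) is not obtained by matching generators or manipulating sums; the paper makes it precise with a $\fgl_{1|1}$ cohomological reduction (the Duflo--Serganova functor): there is an explicit embedding $\fgl_{1|1} \hookrightarrow \fsl_{1|2n+1}$ acting on the principal subalgebra $\fp$ (Remark \ref{rmk:gl11}), and Lemma \ref{lem:coho} shows that every simple $\fgl_{1|1}$-module of nonzero $E$-weight is two-dimensional of superdimension zero, so only the $E$-weight zero subspace contributes to any supercharacter. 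This simultaneously proves your step (ii) --- since the $E$-weight zero subspace of $V^k(\fp)$ is the universal principal subspace of $\fsl_{2n}$ --- and, crucially, reduces the flatness problem of your step (i) to a statement about the $E$-weight zero part of the ideal only, which is what makes that problem tractable at all.

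Second, to show that the $E$-weight zero part of the ideal does not jump, the paper does not argue abstractly about jumps occurring in cancelling pairs; it proves and uses an explicit presentation: by Lemma \ref{lem:sln} the $E$-weight zero subspace of $I_\fp$ is the defining ideal of the principal subspace of $L_k(\fsl_{2n})$, and this ideal is generated by the modes of the fields $e_i(z)^{k+1}$ (Theorem \ref{thm:presentation} in Appendix B, itself a new result extending \cite{StoyanovskyFeigin} and \cite{ButoracKozic} to all $\fsl_m$). Each generator then visibly deforms: replacing $e_i(z)$ by a current of the form $e_{i,\epsilon}(z) = e_i(z) - \epsilon^2 f_{i+1}(z)$, which is still nilpotent, its $(k+1)$st power lies in the ideal at $\epsilon \neq 0$, exactly as in \cite{Stoyanovsky1998}. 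Combined with the easy semicontinuity (the $\epsilon \to 0$ limit of a nonzero element of $I_\epsilon$ is a nonzero element of $I_0$), Corollary \ref{cor:supercharacters} then yields equality of supercharacters. You also omit a separate necessary step: identifying the $\epsilon \neq 0$ quotient $L_k(\fk_\epsilon)$, defined by intersecting with the maximal ideal coming from $\fsl_{1|2n+1}$, with the \emph{simple} vertex algebra $L_k(\fosp_{1|2n})$; the paper does this via Lemma \ref{lem:sln} and a theorem of Gorelik--Serganova \cite{GorelikSerganova} (Corollary \ref{cor:osp}). Without the cohomological reduction, the presentation theorem with its explicit deformation of generators, and this simplicity statement, your outline cannot be completed as written.
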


Our result fills a gap left by Stoyanovsky, i.e. that the vacuum character of the principal subspace of $L_k(\fsl_{N})$ can be identified with the vacuum character of a simple affine VOA $L_k(\fg_N)$ for all integers $N \geq 2$ and $k \geq 1$, where $\fg_{2n+1} \simeq \fsp_{2n}$ and $\fg_{2n} \simeq \fosp_{1|2n}$.
We also note that an immediate corollary of this result establishes part of a conjecture of Bringmann, Calinescu, Folsom, and Kimport concerning the modularity of the right-hand side of Eq. \eqref{eq:charactersintro}.
Explicitly, together with their Theorem 1.2 this result proves Case i. of Conjecture 4.1 of \cite{BCFK} for $N$ even and $\ell = k$, cf. Remark 7 of loc. cit.

\subsection*{Acknowledgements}
We would like to thank Heeyeon Kim for collaboration inspiring this work. We would also like to thank S. Ole Warnaar, Shashank Kanade, and especially Christopher Sadowski for useful comments on a preliminary draft of this document.
NG is supported by the ERC Consolidator Grant No. 864828, titled “Algebraic Foundations of Supersymmetric Quantum Field Theory'' (SCFTAlg) and was previously supported by funds from the Department of Physics and the College of Arts \& Sciences at the University of Washington, Seattle.

\section{Deformations of Lie superalgebras}
\label{sec:finite}
In this section we study certain deformations of the simple Lie algebras $\fsp_{2n}$, $\fso_m$, and $\fosp_{m|2n}$ into principal nilpotent subalgebras of $\fsl_{2n+1}$, $\fsl_{m}$, and $\fsl_{m|2n+1}$, respectively.
The first two examples are meant to elaborate on \cite{Stoyanovsky1998}, and the last example is a simple generalization of the construction.

\subsection{$\fsp_{2n}$}
\label{sec:sp}
We start with the main example of \cite{Stoyanovsky1998}.
Consider the (odd) vector space $\C^{0|2n+1} = \Pi \C^{2n+1}$ and choose a basis $\theta_a$ for $a = 0, \dots, 2n$.
We introduce a (degenerate) supersymmetric bilinear form on $\C^{0|2n+1}[x] := \C^{0|2n+1} \otimes_\C \C[x]$ given by the (super)symmetric quadratic tensor
\[
	B = \sum_{a=0}^{2n-1} x^a \theta_a \theta_{a+1}\,.
\]
We then consider the family of Lie algebras $\fg$ defined as the subalgebra of $\fsl_{0|2n+1}[x] \simeq \fsl_{2n+1}[x]$ that preserves $\theta_0$ and $B$; set $\fg_\epsilon := \fg \otimes_{\C[x]} \C[x]/(x - \epsilon)$.
We aim to show that for $\epsilon \neq 0$ this Lie algebra can be identified with $\fsp_{2n}$ and that for $\epsilon = 0$ the subalgebra tends to the nilpotent subalgebra of $\fsl_{2n+1}$.

\begin{eg}
	For $n = 1$ the algebra $\fg$ is spanned (as a $\C[x]$-module) by
	\[
		h_x = \begin{pmatrix}
			0 & 0 & 1\\
			0 & x & 0\\
			0 & 0 & -x
		\end{pmatrix} \qquad e_x = \begin{pmatrix}
			0 & 0 & 0\\
			0 & 0 & -1\\
			0 & 0 & 0\\
		\end{pmatrix} \qquad f_x = \begin{pmatrix}
			0 & 1 & 0\\
			0 & 0 & 0\\
			0 & -x & 0\\
		\end{pmatrix}
	\]
	which satisfy the following commutation relations:
	\[
		[h_x, e_x] = 2 x e_x \qquad [h_x, f_x] = - 2 x f_x \qquad [e_x, f_x] = h_x
	\]
	It is evident from these commutators that for non-zero $\epsilon$ the algebra $\fg_\epsilon$ is precisely $\fsp_2$.
	Moreover, $\fg_0$ is the strictly upper-triangular subalgebra.
\end{eg}

\begin{lem}
	$\fg_0$ is isomorphic to the strictly upper triangular subalgebra of $\fsl_{2n+1}$.
\end{lem}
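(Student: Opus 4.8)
The plan is to trivialize the deformation of the bilinear form by a diagonal gauge transformation, reducing the computation of $\fg_0$ to a flat degeneration of a fixed copy of $\fsp_{2n}$.

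First I would translate the two defining conditions into matrix equations on $M\in\fsl_{2n+1}[x]$: preserving $\theta_0$ means the first column vanishes, $Me_0=0$, and preserving $B$ means $MG(x)+G(x)M^{T}=0$, where $G(x)$ is the antisymmetric matrix with entries $G(x)_{a,a+1}=x^{a}=-G(x)_{a+1,a}$ (one checks both conditions directly against the generators of the Example). The crucial observation is that $G(x)$ is gauge-trivial: writing $D(x)=\mathrm{diag}(x^{\lfloor a/2\rfloor})_{a=0}^{2n}$ and $J=\sum_{a=0}^{2n-1}(E_{a,a+1}-E_{a+1,a})$ for the constant tridiagonal form, the identity $\lfloor a/2\rfloor+\lfloor (a+1)/2\rfloor=a$ gives $D(x)\,J\,D(x)=G(x)$. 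Substituting $M=D(x)N D(x)^{-1}$ and using $D(x)^{-1}e_0=e_0$ turns the conditions into $Ne_0=0$ and $NJ+JN^{T}=0$, which no longer involve $x$. Hence conjugation by $D(x)$ is a Lie-algebra isomorphism $\fg\otimes_{\C[x]}\C[x,x^{-1}]\cong\mathfrak{h}\otimes_\C\C[x,x^{-1}]$, where $\mathfrak{h}=\{N\in\fsl_{2n+1}:Ne_0=0,\ NJ+JN^{T}=0\}$ is a constant Lie algebra; for $\epsilon\neq0$ this recovers $\fg_\epsilon=D(\epsilon)\mathfrak{h}D(\epsilon)^{-1}\cong\mathfrak{h}\cong\fsp_{2n}$, of dimension $n(2n+1)$, and exhibits $\fg$ itself as the polynomial locus $D(x)\,\mathfrak{h}[x,x^{-1}]\,D(x)^{-1}\cap\fsl_{2n+1}[x]$.

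Next I would compute $\fg_0$ as the $x\to0$ (flat) limit of this family. Conjugation by $D(x)$ scales $E_{ab}$ by $x^{w(a,b)}$ with $w(a,b)=\lfloor a/2\rfloor-\lfloor b/2\rfloor$, so for $N\in\mathfrak{h}$ with lowest occurring weight $w_{\min}(N)$ the normalized element $x^{-w_{\min}(N)}D(x)N D(x)^{-1}$ lies in $\fg$ and reduces modulo $x$ to the lowest-weight component $\ell(N)$ of $N$. Thus $\fg_0$ is spanned by these leading terms, i.e. it is the associated graded $\mathrm{gr}_w\,\mathfrak{h}$ for the increasing weight filtration; in particular $\fg$ is free over $\C[x]$ and $\dim\fg_0=\dim\mathfrak{h}=n(2n+1)=\dim\mathfrak{n}_+$, so it suffices to prove the inclusion $\fg_0\subseteq\mathfrak{n}_+$.

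The inclusion $\fg_0\subseteq\mathfrak{n}_+$ amounts to showing that every leading term $\ell(N)$ is strictly upper triangular, and this is the main obstacle. When $w_{\min}(N)<0$ it is automatic, since $w(a,b)<0$ forces $a<b$; the real work is the borderline case $w_{\min}(N)=0$, where one must exclude diagonal entries $E_{aa}$ and within-block lower entries $E_{2m+1,2m}$ from the weight-$0$ component, together with the possibility $w_{\min}(N)>0$ (which would give a strictly lower leading term). The difficulty is that $J$ is not homogeneous for the block grading, so the equations cutting out $\mathfrak{h}$ mix weights and there is no graded basis to read off directly. I would resolve this by producing an explicit filtered basis of $\mathfrak{h}$ — deformed Chevalley generators for a triangular decomposition of $\fsp_{2n}$ adapted to the nesting of the two-element blocks, generalizing $h_x,e_x,f_x$ of the Example — and checking that their leading terms are distinct matrix units $E_{ab}$ with $a<b$, hence exhaust a basis of $\mathfrak{n}_+$ (the rank-one case being precisely $E_{01},E_{02},E_{12}$); an induction on $n$ peeling off the last block is an alternative. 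Once the leading terms are pinned down, closure under the bracket is automatic since $\fg_0$ is a quotient of the $\C[x]$-Lie algebra $\fg$, and the equality $\fg_0=\mathfrak{n}_+$ of subalgebras of $\fsl_{2n+1}$ follows.
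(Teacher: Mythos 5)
Your overall framework is sound, and it is genuinely different from the paper's argument: the gauge identity $D(x)\,J\,D(x)=G(x)$ with $D(x)=\mathrm{diag}(x^{\lfloor a/2\rfloor})$ does trivialize the family over $\C[x^{\pm 1}]$, the description of $\fg_0$ as the span of leading terms (the associated graded of $\mathfrak{h}$ for the weight filtration) is correct, and your implicit observation that a basis of $\mathfrak{h}$ with linearly independent leading terms automatically has leading terms spanning $\fg_0$ is also valid. The paper proceeds quite differently: it solves the invariance constraints, Eq.~\eqref{eq:spconstraint}, by induction on the column index, showing that the strictly upper-triangular entries of an invariant matrix are free while every remaining entry is a determined linear expression in them lying in $x\C[x]$; evaluating at $x=0$ then yields both inclusions $\fg_0\subseteq\mathfrak{n}_+$ and $\fg_0\supseteq\mathfrak{n}_+$ simultaneously, with no dimension count needed.

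The problem is that your proposal stops exactly where you yourself flag ``the real work.'' You never construct the filtered basis of $\mathfrak{h}$ (the deformed Chevalley generators whose leading terms are distinct matrix units $E_{ab}$, $a<b$), nor carry out the alternative induction on $n$; you only state that you would. Ruling out weight-zero leading terms of the form $E_{aa}$ and $E_{2m+1,2m}$, and ruling out strictly positive leading weights, is precisely the content of the lemma: it is the only place where the interaction between $NJ+JN^{T}=0$ (which, as you note, is inhomogeneous for the block grading) and $Ne_0=0$ gets used, and settling it requires a computation of essentially the same length as the paper's induction. A second, smaller gap: your dimension count $\dim\fg_0=\dim\mathfrak{h}=n(2n+1)$ rests on the identification $\mathfrak{h}\cong\fsp_{2n}$, which you assert but do not prove; in the paper this is a separate subsequent lemma (proved via the change of basis $\phi_{2i-1}=x^{i-1}\theta_{2i-1}$, $\phi_{2i}=x^{i-1}(x\theta_{2i}-\theta_{2i-2})$), so a self-contained proof along your lines must either reprove it or compute $\dim\mathfrak{h}$ directly. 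As written, the proposal is a correct and interesting plan, but its decisive steps are unexecuted, so it does not yet constitute a proof.
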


\begin{proof}
	We make explicit the constraints imposed by invariance of $\theta_0$ and $B$ under $X \in \fsl_{2n+1}[x]$.
	Let $X^a{}_b \in \C[x]$ be the matrix elements of $X$ in the basis $\theta_a$.
	Preserving $\theta_0$ implies $X^a{}_0 = 0$ and preserving $B_x$ implies
	\begin{equation}
	\label{eq:spconstraint}
		x^{b-1} X^a{}_{b-1} - x^b X^a{}_{b+1} + x^a X^b{}_{a+1} - x^{a-1} X^b{}_{a-1} = 0
	\end{equation}
	for all $a, b = 0, \dots 2n$, where we set $X^a{}_{-1} = 0$ and $X^a{}_{2n+1} = 0$; it suffices to consider $a < b$ due to the anti-symmetry of this constraint.
	We will deduce the lemma by showing that these constraints imply $X^a{}_b$ is unconstrained for $a < b$ and belongs to $x \C[x]$ otherwise.
	
	We proceed by induction.
	Consider first the case $a = 0$, where Eq. \eqref{eq:spconstraint} implies
	\[
		X^b{}_1 = x^{b-1}\big(x X^0{}_{b+1} - X^0{}_{b-1}\big)
	\]
	for all $1 \leq b \leq 2n$.
	This is manifestly proportional to $x$ for $b > 1$; this also holds for $b = 1$ because $X^0{}_0 = 0$.
	In particular, $X^b{}_1$ is constrained to be a (linear) polynomial of the strictly upper-triangular matrix elements for all $b$ and is proportional to $x^{b-1}$ for $b > 1$ and proportional to $x$ for $b = 1$.
	
	Now suppose for all $a \leq a_0$ and $b \geq a$ we have shown that the matrix element $X^b{}_a$ is constrained to be a (linear) polynomial in the strictly upper-triangular matrix elements that is proportional to $x^{b-a}$ for all $b > a$ and proportional to $x$ for $b = a$.
	Rearranging Eq. \eqref{eq:spconstraint} for $a = a_0$ gives
	\[
		X^b{}_{a_0+1} = x^{-1} X^b{}_{a_0-1} + x^{b-a_0-1}\big(x X^{a_0}{}_{b+1} - X^{a_0}{}_{b-1}\big)\,.
	\]
	The inductive hypothesis implies that the right-hand side is necessarily a (linear) polynomial in the strictly upper-triangular matrix elements.
	Moreover, the first term is proportional to $x^{b-a_0}$ and the second term is proportional to $x^{b-a_0-1}$ for all $b > a_0+1$ and proportional to $x$ for $b = a_0+1$, as desired.
\end{proof}

\begin{rmk}
	We make the solution to the above constraints explicit for $n = 2$.
	A general element $X$ takes the form
	\[
	\begin{pmatrix}
		0 & X^0{}_1 & X^0{}_2 & X^0{}_3 & X^0{}_4\\
		0 & x X^0{}_2 & X^1{}_2 & X^1{}_3 & X^1{}_4\\
		0 & -x X^0{}_1 + x^2 X^0{}_3 & -x X^0{}_2 + x X^1{}_3 & X^2{}_3 & X^2{}_4\\
		0 & -x^2 X^0{}_2 + x^3 X^0{}_4 & -x X^1{}_2 + x^2 X^1{}_4 & -x X^1{}_3 + x X^2{}_4 +x^2 X^0{}_4 & X^3{}_4\\
		0 & -x^3 X^0{}_3 & -x^2 X^1{}_3 & -x X^2{}_3 - x^2 X^0{}_3 & - x X^2{}_4 - x^2 X^0{}_4\\
	\end{pmatrix}
	\]
\end{rmk}

\begin{lem}
	\label{lem:sp2n}
	$\fg_\epsilon$ can be identified with $\fsp_{2n}$ when $\epsilon \neq 0$.
\end{lem}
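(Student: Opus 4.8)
The plan is to realize $\fg_\epsilon$ explicitly as a matrix Lie algebra and then restrict it to a canonical $2n$-dimensional invariant subspace on which the induced structure is symplectic. Writing $M_\epsilon$ for the antisymmetric Gram matrix of the form $B_\epsilon$ in the basis $\theta_a$ (so that $(M_\epsilon)_{a,a+1} = \epsilon^a = -(M_\epsilon)_{a+1,a}$ and all other entries vanish), the analysis behind Eq.~\eqref{eq:spconstraint} shows that $\fg_\epsilon$ consists precisely of the traceless matrices $X$ with vanishing first column (the condition $X\theta_0 = 0$) satisfying $X M_\epsilon + M_\epsilon X^{\mathsf T} = 0$; one checks directly that this matrix identity reproduces Eq.~\eqref{eq:spconstraint} at $x=\epsilon$. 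I emphasize that, because the $\theta_a$ are odd, the invariant object is a bivector rather than a two-form, which is why the correct condition involves $M_\epsilon X^{\mathsf T}$ and not $X^{\mathsf T} M_\epsilon$; keeping this distinction straight is the most delicate bookkeeping in the argument.

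First I would record two elementary facts about the tridiagonal matrix $M_\epsilon$ for $\epsilon \neq 0$: it has rank $2n$, and its one-dimensional kernel $R = \ker M_\epsilon$ is spanned by a vector $r$ with $r_0 \neq 0$ (explicitly $r = \sum_m \epsilon^{-m}\theta_{2m}$). Setting $U := \operatorname{Im} M_\epsilon$, which is $2n$-dimensional, the relation $X M_\epsilon = -M_\epsilon X^{\mathsf T}$ gives $XU \subseteq U$ at once, so $U$ is $X$-invariant; moreover $\theta_0 \notin U$, since $U = (\ker M_\epsilon)^\perp$ and $\langle r, \theta_0\rangle = r_0 \neq 0$. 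The bivector $B_\epsilon$ restricts to a nondegenerate symplectic structure on $U$, so restriction defines a Lie algebra homomorphism $\phi\colon \fg_\epsilon \to \fsp(U) \cong \fsp_{2n}$, $X \mapsto X|_U$.

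It then remains to show $\phi$ is an isomorphism, which is cleanest in a basis adapted to $M_\epsilon$. Choosing a symplectic basis of $U$ together with $r$, so that $M_\epsilon = \operatorname{diag}(J,0)$ with $J$ the standard $2n \times 2n$ symplectic matrix, a block computation shows that $X M_\epsilon + M_\epsilon X^{\mathsf T} = 0$ forces $X = \left(\begin{smallmatrix} A & b \\ 0 & d\end{smallmatrix}\right)$ with $A \in \fsp(U)$ and $b, d$ a priori free; tracelessness kills $d$, and the condition $X\theta_0 = 0$—using that the $R$-component of $\theta_0$ is nonzero—determines $b$ uniquely in terms of $A$. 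Hence $X$ is completely determined by $A = \phi(X)$, giving injectivity and surjectivity of $\phi$ simultaneously. Equivalently, one may argue abstractly: $\ker \phi$ consists of those $X$ with $X M_\epsilon = 0$, which forces $X = s\, r^{\mathsf T}$ for some vector $s$, whence $X\theta_0 = r_0\, s = 0$ with $r_0 \neq 0$ gives $X = 0$; surjectivity then follows from the dimension count $\dim_\C \fg_\epsilon = \binom{2n+1}{2} = n(2n+1) = \dim \fsp_{2n}$, which is visible from the preceding lemma since the strictly upper-triangular entries $X^a{}_b$ with $a<b$ freely parametrize $\fg$ independently of $\epsilon$.

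I expect the main obstacle to be organizational rather than deep: correctly identifying the invariant condition as the bivector condition $X M_\epsilon + M_\epsilon X^{\mathsf T} = 0$ (so that the distinguished complement is $\operatorname{Im} M_\epsilon$, not a naive orthogonal complement) and verifying that $\theta_0$ meets this complement transversally, i.e.\ $r_0 \neq 0$. Once these points are in place the symplectic reduction is routine, and the same computation simultaneously recovers the degenerate limit $\fg_0$ as the strictly upper-triangular subalgebra, consistent with the preceding lemma.
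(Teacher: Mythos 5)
Your proof is correct, and the underlying geometry is the same as in the paper: for $\epsilon \neq 0$ one isolates the $2n$-dimensional subspace on which the bivector $B_\epsilon$ is nondegenerate and identifies $\fg_\epsilon$ with the symplectic algebra of that subspace. The difference is in how that subspace is produced and how completeness is checked. The paper writes down the explicit conjugate basis $\phi_{2i-1} = \epsilon^{i-1}\theta_{2i-1}$, $\phi_{2i} = \epsilon^{i-1}(\epsilon\theta_{2i}-\theta_{2i-2})$, in which $B = \sum_i \phi_{2i-1}\phi_{2i}$, and then simply asserts that the stabilizer of $(\theta_0, B)$ is the $\fsp_{2n}$ rotating the $\phi$'s; your subspace $U = \mathrm{Im}\, M_\epsilon$ is exactly the span of those $\phi$'s (each $\phi_j$ pairs to zero with your kernel vector $r=\sum_m \epsilon^{-m}\theta_{2m}$), so the decomposition is literally the same one. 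What your route buys is that it is basis-free and that it actually proves the word ``precisely'': the block computation forcing $c=0$ and $d=0$ and determining $b$ from $A$ via $X\theta_0=0$ is the verification, left implicit in the paper, that invariance of $B_\epsilon$ forbids mixing $U$ into the remaining direction, and your injectivity/dimension argument pins down that the restriction map $X \mapsto X|_U$ is an isomorphism. What the paper's route buys is brevity, and the fact that the basis change is written over $\C[x^{\pm 1}]$ handles the whole family away from $\epsilon = 0$ at once.

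One peripheral remark in your final paragraph is false, though it plays no role in the lemma (which concerns only $\epsilon \neq 0$): the same computation does not ``simultaneously recover the degenerate limit $\fg_0$''. At $\epsilon = 0$ the Gram matrix $M_0$ has rank $2$, not $2n$, and for $n \geq 2$ the stabilizer of $(\theta_0, B_0)$ in $\fsl_{2n+1}$ --- cut out by $X\theta_0 = 0$ and $X\theta_1 \in \C\theta_0$ --- has dimension $4n^2-1 > n(2n+1)$, so it is strictly larger than the strictly upper-triangular subalgebra. The identification of the flat limit $\fg_0 = \fg\otimes_{\C[x]}\C[x]/(x)$ with the nilpotent subalgebra genuinely requires the divisibility-by-$x$ induction of the preceding lemma; it is not a consequence of the rank-$2n$ normal-form computation, which degenerates at $\epsilon = 0$.
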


\begin{proof}
	We consider the following elements of $\C^{0|2n+1}\otimes_\C[x]$:
	\[
		\theta = \theta_0 \qquad \phi_{2i-1} = x^{i-1} \theta_{2i-1} \qquad \phi_{2i} = x^{i-1}(x \theta_{2i} - \theta_{2i-2})
	\]
	where $i = 1, \dots, n$; these elements are conjugate to the basis $\theta_a$ if we localize $x$, i.e. they are conjugate bases of $\C^{0|2n+1}[x^{\pm 1}]$ as a $\C[x^{\pm1}]$-module.
	In this basis $B$ takes the form
	\[
		B = \sum_{i=1}^{n} \phi_{2i-1} \phi_{2i}
	\]
	which is non-degenerate when restricted to the subspace spanned by the $\phi$'s.
	In particular, we conclude that the subalgebra of $\fsl_{2n+1}[x^{\pm 1}]$ preserving $\theta = \theta_0$ and $B$ is precisely the $\fsp_{2n}[x^{\pm 1}]$ rotating the $\phi$'s.
\end{proof}

\begin{rmk}
	The original basis $\theta_a$ is not a weight basis for the Cartan subalgebra of $\fsp_{2n}$ rotating the pairs $\phi_{2i-1}, \phi_{2i}$ with opposite weights and preserving $\theta$.
	Both are eigenbases for the linear transformation that scales $\theta_a$ for $a$ odd (resp. even) with weight $1$ (resp. $-1$); this transformation preserves $B$, but fails to preserve $\theta_0$ (only preserving its span) and does not belong to $\fsl_{2n+1}$ as it has trace $1$.
	Nonetheless, it induces a $2\Z$-grading on the family $\fg_\epsilon$.
	For $n = 1$ and $\epsilon \neq 0$ it can be identified with the $h_x$ weight grading.
\end{rmk}

\subsection{$\fso_{m}$}
\label{sec:so}

We now describe the example mentioned in Remark 2 of \cite{Stoyanovsky1998}, suggested to the author by Panov.
We consider the (even) vector space $\C^{m|0} = \C^m$ and choose a basis $z_r$, $r = 1, \dots, m$.
We then consider the symmetric quadratic tensor
\[
	C = \tfrac{1}{2}\sum_{r=1}^{m} x^{2(r-1)} z_{r}^2\,.
\]
We define the Lie algebra $\ff$ as the subalgebra of $\fsl_{m|0}[x] \simeq \fsl_m[x]$ preserving $C$ and denote $\ff_\epsilon := \ff \otimes_{\C[x]} \C[x]/(x - \epsilon)$.
The analog of Eq. \eqref{eq:spconstraint} for $Y = (Y^r{}_s) \in \fsl_m[x]$ takes the form
\begin{equation}
\label{eq:soconstraint}
	x^{2r} Y^s{}_r + x^{2s} Y^r{}_s = 0
\end{equation}
which implies $Y^s{}_r = x^{2(s-r)} Y^r{}_s$ for all $s \geq r$.
We can similarly consider the conjugate basis $w_r = x^{r-1} z_r$, where the quadratic tensor takes the form
\[
	C = \tfrac{1}{2}\sum_{r=1}^{m} w_{r}^2\,.
\]

\begin{lem}
	$\ff_\epsilon$ is isomorphic to the strictly upper-triangular subalgebra of $\fsl_{m}$ for $\epsilon = 0$ and isomorphic to $\fso_{m}$ for $\epsilon \neq 0$.
\end{lem}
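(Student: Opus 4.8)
The plan is to mirror the two lemmas established above for $\fsp_{2n}$, but the situation here is considerably cleaner: because the quadratic tensor $C$ is already diagonal in the $z_r$ basis, the invariance constraint of Eq. \eqref{eq:soconstraint} can be solved directly, with none of the induction needed in the symplectic case.

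First I would dispose of the case $\epsilon = 0$ by reading off the consequences of Eq. \eqref{eq:soconstraint}. Setting $s = r$ gives $2x^{2r} Y^r{}_r = 0$, so every diagonal entry $Y^r{}_r$ vanishes identically as an element of $\C[x]$; in particular the defining trace condition of $\fsl_m$ is automatically satisfied and imposes nothing further. For $s > r$ the relation $Y^s{}_r = x^{2(s-r)} Y^r{}_s$ expresses each strictly lower-triangular matrix element as $x^{2(s-r)}$ — hence a multiple of $x^2$, and \emph{a fortiori} of $x$ — times the corresponding strictly upper-triangular element, while the strictly upper-triangular elements $Y^r{}_s$ with $r < s$ remain unconstrained. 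Specializing at $x = 0$ therefore annihilates every diagonal and strictly lower-triangular entry and leaves the strictly upper-triangular entries free, so $\ff_0$ is precisely the strictly upper-triangular subalgebra of $\fsl_m$.

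For $\epsilon \neq 0$ I would pass to the conjugate basis $w_r = x^{r-1} z_r$ already introduced above, in which $C = \tfrac{1}{2}\sum_{r=1}^{m} w_r^2$ is the standard non-degenerate symmetric form. Since each $x^{r-1}$ is invertible over $\C[x^{\pm 1}]$, the $w_r$ form a basis of $\C^m[x^{\pm 1}]$ as a $\C[x^{\pm 1}]$-module conjugate to the $z_r$, so the subalgebra of $\fsl_m[x^{\pm 1}]$ preserving $C$ is exactly the $\fso_m[x^{\pm 1}]$ rotating the $w_r$. Specializing at $x = \epsilon \neq 0$ then identifies $\ff_\epsilon$ with $\fso_m$, and the argument is uniform in the parity of $m$ since the standard form is non-degenerate in all dimensions.

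The only point requiring care is that the change of basis is legitimate solely after inverting $x$, exactly as in Lemma \ref{lem:sp2n}; consequently the orthogonal description is available only away from $\epsilon = 0$, and the nilpotent limit must be extracted from the explicit constraint rather than from the $\fso_m$ picture. I expect no genuine obstacle beyond this bookkeeping, as the diagonal form of $C$ renders both the strictly-upper-triangular degeneration and the non-degenerate specialization transparent.
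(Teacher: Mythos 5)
Your proposal is correct and takes essentially the same approach as the paper, which leaves this lemma's proof implicit in the immediately preceding text: the constraint \eqref{eq:soconstraint} handles the $\epsilon = 0$ degeneration and the conjugate basis $w_r = x^{r-1} z_r$ handles $\epsilon \neq 0$, exactly as you argue. (A trivial point: the relation should carry a sign, $Y^s{}_r = -x^{2(s-r)} Y^r{}_s$, but this changes nothing in the argument.)
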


\subsection{$\fosp_{m|2n}$}
\label{sec:osp}
We now combine the above examples and consider the (super) vector space $\C^{m|2n+1} = \C^m \oplus \Pi \C^{2n+1}$ with (homogeneous) basis $z_r, \theta_a$.
We then consider the (super)symmetric quadratic tensor
\[
	D = \tfrac{1}{2}\sum_{r=1}^m x^{2(r-1)} z^2_{r} + x^{2m} \sum_{a=0}^{2n-1} x^{2a} \theta_a \theta_{a+1}
\]
We define the Lie superalgebra $\fk$ as the subalgebra of $\fsl_{m|2n+1}[x]$ preserving $\theta_0$ and $D$ and define $\fk_\epsilon$ as before. 

\begin{eg}
	For the case $m = n = 1$ we find that $\fk$ has even generators
	\[
		h_x = \left(\begin{array}{c | c c c}
			0 & 0 & 0 & 0\\ \hline
			0 & 0 & 0 & 1\\
			0 & 0 & x^2 & 0\\
			0 & 0 & 0 & -x^2\\
		\end{array}\right) \qquad e_x = \left(\begin{array}{c | c c c}
			0 & 0 & 0 & 0\\ \hline
			0 & 0 & 0 & 0\\
			0 & 0 & 0 & -1\\
			0 & 0 & 0 & 0\\
		\end{array}\right) \qquad f_x = \left(\begin{array}{c | c c c}
			0 & 0 & 0 & 0\\ \hline
			0 & 0 & 1 & 0\\
			0 & 0 & 0 & 0\\
			0 & 0 & -x^2 & 0\\
		\end{array}\right)
	\]
	and odd generators
	\[
		\psi_{1,x} = \left(\begin{array}{c | c c c}
			0 & 0 & 0 & 1\\ \hline
			0 & 0 & 0 & 0\\
			x^4 & 0 & 0 & 0\\
			0 & 0 & 0 & 0\\
		\end{array}\right) \qquad \psi_{2,x} = \left(\begin{array}{c | c c c}
			0 & 0 & 1 & 0\\ \hline
			x^2 & 0 & 0 & 0\\
			0 & 0 & 0 & 0\\
			-x^4 & 0 & 0 & 0\\
		\end{array}\right)
	\]
\end{eg}

\begin{lem}
	\label{lem:ospm2n}
	The subalgebra $\fk_\epsilon$ is isomorphic to the strictly upper-triangular subalgebra of $\fsl_{m|2n+1}$ preserving $\theta_0$ for $\epsilon = 0$ and is isomorphic to $\fosp_{m|2n}$ when $\epsilon \neq 0$.
\end{lem}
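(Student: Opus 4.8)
The plan is to treat the even ($z$) and odd ($\theta$) sectors of $\C^{m|2n+1}$ exactly as in \S\ref{sec:so} and \S\ref{sec:sp} respectively, the only genuinely new ingredient being the fermionic (even--odd) blocks of $X\in\fsl_{m|2n+1}[x]$ that are now coupled by invariance of $D$. For $\epsilon\neq0$ I would produce a conjugate basis, obtained by localizing $x$, in which $D$ becomes the standard orthosymplectic form; for $\epsilon=0$ I would write out the invariance constraints matrix-element by matrix-element and run the same inductive degeneration as in the $\fsp_{2n}$ and $\fso_m$ lemmas.

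For the $\epsilon\neq0$ statement, set
\[
w_r = x^{r-1}z_r,\qquad \phi_{2i-1} = x^{m+2(i-1)}\theta_{2i-1},\qquad \phi_{2i} = x^{m+2(i-1)}\bigl(x^2\theta_{2i}-\theta_{2i-2}\bigr),
\]
for $r=1,\dots,m$ and $i=1,\dots,n$, together with $\theta=\theta_0$. A direct computation shows that these form a basis of $\C^{m|2n+1}[x^{\pm1}]$ conjugate to the $z_r,\theta_a$ and that in it
\[
D = \tfrac12\sum_{r=1}^m w_r^2 + \sum_{i=1}^n \phi_{2i-1}\phi_{2i},
\]
i.e. the standard symmetric form on the even span of the $w_r$ plus the standard symplectic form on the odd span of the $\phi_j$, with $\theta_0$ the extra null direction. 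As in \S\ref{sec:so} and \S\ref{sec:sp} the form is non-degenerate on $\mathrm{span}(w_r,\phi_j)$, so the subalgebra of $\fsl_{m|2n+1}[x^{\pm1}]$ preserving $\theta_0$ and $D$ is precisely $\fosp_{m|2n}[x^{\pm1}]$ acting on the $w_r$ and $\phi_j$; restricting to $x=\epsilon\neq0$, which lies in the localization, identifies $\fk_\epsilon$ with $\fosp_{m|2n}$.

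For the $\epsilon=0$ statement I would expand the condition that $X=(X^p{}_q)\in\fsl_{m|2n+1}[x]$ preserve $\theta_0$ and $D$ into its three types of blocks. The even--even block reproduces an analogue of the $\fso_m$ constraint \eqref{eq:soconstraint}, the odd--odd block reproduces an analogue of the $\fsp_{2n}$ constraint \eqref{eq:spconstraint} (rescaled by $x^{2m}$ and with doubled powers of $x$), and the fermionic blocks yield a single relation coupling the even--odd and odd--even blocks of $X$ through the two metrics. Feeding these into the inductive arguments already used for $\fso_m$ and $\fsp_{2n}$---now applied sector by sector and supplemented by the cross-relation---I would show that every matrix element on or below the diagonal lies in $x\C[x]$ while those strictly above it are free. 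Setting $x=0$ then annihilates all non-strictly-upper-triangular entries, leaving exactly the strictly upper-triangular subalgebra of $\fsl_{m|2n+1}$ preserving $\theta_0$.

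The main obstacle is the fermionic cross-block, which has no analogue in \S\ref{sec:so} or \S\ref{sec:sp}, and here the prefactor $x^{2m}$ in $D$ is doing the essential work. In the conjugate basis above the even weights $w_r\sim x^{r-1}$ range over $0,\dots,m-1$, whereas the odd weights $\phi_j$ begin at $x^m$; the factor $x^{2m}$ is precisely what lifts the entire odd sector strictly above the even sector, so that the power-of-$x$ filtration is monotone across the even--odd interface in the ordering $z_1<\dots<z_m<\theta_0<\dots<\theta_{2n}$. Checking that this separation makes the polynomiality constraints on the fermionic blocks consistent---so that the cross-relation forces the below-diagonal odd--even entries into $x\C[x]$ and the induction closes at the even--odd boundary---is the one computation genuinely beyond the previous two lemmas; everything else is a sector-wise repetition of \S\ref{sec:so} and \S\ref{sec:sp}.
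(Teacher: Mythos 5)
Your proposal is correct and takes essentially the same route as the paper: for $\epsilon\neq 0$ the paper likewise invokes the conjugate bases of Sections \ref{sec:sp} and \ref{sec:so} (your explicit $w_r,\phi_j$ instantiate exactly this), and for $\epsilon=0$ it reduces the even part to the two previous lemmas and writes out the fermionic cross-block constraint $U^a{}_r = x^{2(m-r+a)}\bigl(x^2 V^r{}_{a+1}-V^r{}_{a-1}\bigr)$, whose exponent is non-negative precisely because of the $x^{2m}$ prefactor you highlight, forcing $U^a{}_r\in x\C[x]$. The only cosmetic slip is the intermediate claim that all entries strictly above the diagonal are free --- the $\theta_0$-column entries $V^r{}_0$ must vanish --- but your concluding sentence (and the lemma's phrase ``preserving $\theta_0$'') already accounts for this.
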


\begin{rmk}
	As is evident in the above example, the subalgebra $\fk_\epsilon$ does not deform to the entire strictly upper-triangular subalgebra of $\fsl_{m|2n+1}$ unless $m = 0$.
	Indeed, the odd subspace of the latter has dimension $m(2n+1)$ whereas the odd subspace of $\fosp_{m|2n}$ is dimension $2mn$.
\end{rmk}

\begin{proof}
	The second assertion follows from the changes of basis introduced in Sections \ref{sec:sp} and \ref{sec:so}.
	
	To establish the first assertion, we note that the even subalgebra of $\fk$ is simply $\ff \oplus \fg$ --- the additional even generator on top of $\fsl_m[x] \oplus \fsl_{2n+1}[x]$ does not preserve $\theta_0$.
	We see that the even subalgebra of $\fk_0$ is simply the nilpotent subalgebra of $\fsl_m \oplus \fsl_{2n+1}$.
	To check the odd subspace of $\fk_0$, we consider (odd) linear map sending $z_r \to U^a{}_r \theta_a$ and $\theta_a \to V^r{}_a z_r$, with $V^r{}_0 = 0$; we must verify that $U^a{}_r \to 0$ is proportional to $x$.
	Preserving $D$ requires
	\[
		U^a{}_r = x^{2(m-r+a)} \big(x^2 V^r{}_{a+1} - V^r{}_{a-1}\big)
	\]
	for $a = 0, \dots, 2n$ and $r = 1, \dots, m$, where we set $V^r{}_{-1} = 0$ and $V^r{}_{2n+1} = 0$.
	In particular, we must that have $U^a{}_r$ is proportional to $x$ for all $a$, $r$.
\end{proof}

\section{Duflo-Serganova reduction of deformations}
\label{sec:DufloSerganova}

The following is a special case of cohomological reduction \cite{Candu:2010yg} as well as the Duflo-Serganova functor \cite{Serganova:2011zz, gorelik2017}.

\subsection{$\fgl_{1|1}$ preliminaries}

We fix a basis $N, E, \psi^\pm$ of $\fgl_{1|1}$ with even generators $N, E$, odd generators $\psi^\pm$, and commutation relations
\[
	[N, \psi^\pm] = \pm  \psi^\pm, \qquad [\psi^+, \psi^-] = E
\]
and  $E$ the central element of $\fgl_{1|1}$.
We call a $\fgl_{1|1}$-module a weight module if its Cartan subalgebra acts semisimply.
As it is central, $E$ acts on an indecomposable module by multiplication by a scalar and so the category of weight modules admits a block decompositon
\[
	\CC = \bigoplus_{e \in \C} \CC_e
\]
with $\CC_e$ the block on which $E$ acts by the scalar $e$. 
Let $\CD_0$ be the category whose objects are direct sums of trivial representations of $\fgl_{1|1}$ and set
\[
	\CD := \CD_0 \oplus \bigoplus_{e \in \R_{>0}} \CC_e.
\]
This category is semisimple and closed under tensor products.
We complete this category to allow for countable direct sums of objects and denote this completion by the symbol $\overline{\CD}$.

For $M$ an object of $\CD$, let $H(M)$ denote the cohomology of $M$ with respect to $\psi^+$.
\begin{lem}
	\label{lem:coho}
	Let $M$ be an object in $\overline{\CD}$ and let
	\[
		M = \bigoplus_{e \in \R_{\geq 0}} M_e
	\]
	be its decomposition into $E$-eigenspaces.
	Then $H(M) = M_0$, and if $M$ in $\CD$, then also $\text{sdim}(M) = \text{sdim}(M_0)$.
\end{lem}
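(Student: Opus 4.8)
The plan is to reduce everything to a single $E$-eigenblock and then read off the answer directly from the $\fgl_{1|1}$ relations. First I would note that, since $\psi^+$ is odd and the only nonzero anticommutator among the odd generators is $\{\psi^+,\psi^-\} = E$, we have $(\psi^+)^2 = \tfrac12\{\psi^+,\psi^+\} = 0$, so $\psi^+$ is a genuine differential and $H(M) = \ker\psi^+/\operatorname{im}\psi^+$ is well defined. Because $E$ is central, $\psi^+$ commutes with $E$ and hence preserves each $E$-eigenspace $M_e$; the complex $(M,\psi^+)$ therefore splits as the direct sum of the complexes $(M_e,\psi^+|_{M_e})$, and since cohomology commutes with (countable) direct sums of vector spaces it suffices to compute $H(M_e)$ for each $e \geq 0$ separately.

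For $e > 0$ I would produce a contracting homotopy. On $M_e$ the central relation reads $\psi^+\psi^- + \psi^-\psi^+ = E = e$, so setting $h := e^{-1}\psi^-$ gives $\psi^+ h + h\psi^+ = \mathrm{id}_{M_e}$. The identity on $M_e$ is thus null-homotopic and $H(M_e) = 0$. For $e = 0$, the definition of $\CD$ forces $M_0$ to lie in $\CD_0$, i.e.\ to be a direct sum of trivial representations, on which $\psi^+$ acts by zero; hence $H(M_0) = M_0$. Combining the two cases yields $H(M) = \bigoplus_{e \geq 0} H(M_e) = M_0$, and I would check that the completion to $\overline{\CD}$ does not reintroduce any $E=0$ modules beyond the trivial ones before concluding.

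For the superdimension statement I would restrict to $M$ in $\CD$, where $M$ is a finite direct sum of simples and hence finite-dimensional. Here I would invoke the Euler--Poincar\'e principle for a finite-dimensional super vector space equipped with an odd differential: since $\psi^+$ is parity-reversing, $M$ decomposes as $H(M)$ together with an acyclic complement built from pairs $\{v,\psi^+ v\}$ of opposite parity, each of which contributes zero to the supertrace. Therefore $\text{sdim}(M) = \text{sdim}(H(M)) = \text{sdim}(M_0)$.

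The only real subtlety, and the step I would be most careful about, is the role of $\CD_0$ in the $e = 0$ sector: the statement is genuinely false for the full block $\CC_0$ of weight modules, whose indecomposable projectives carry nontrivial $\psi^+$-cohomology. The restriction to trivial modules in $\CD_0$ is exactly what forces $\psi^+|_{M_0} = 0$, so I would treat the identification of the $E=0$ part with $\CD_0$ as the crux of the argument rather than the (routine) homotopy and Euler-characteristic computations.
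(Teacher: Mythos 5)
Your proof is correct, but it replaces the paper's key mechanism with a different (and in one respect more robust) one. The paper's proof leans on semisimplicity of $\overline{\CD}$ from the start: for $e \neq 0$ it decomposes $M_e$ into the two-dimensional simples $M_{n,e}$, spanned by $v, \psi^- v$, and computes directly that $\psi^+ \psi^- v = E v = ev \neq 0$, so each simple has vanishing cohomology and vanishing superdimension; summing over simples gives both claims at once. You instead prove $H(M_e) = 0$ by exhibiting the contracting homotopy $h = e^{-1}\psi^-$, with $\psi^+ h + h\psi^+ = \mathrm{id}_{M_e}$, which requires no semisimplicity and no classification of simples --- it works for any module on which $E$ acts by a nonzero scalar, so in your argument semisimplicity (really, the definition of $\CD_0$) is only needed to force $\psi^+|_{M_0} = 0$, which you correctly single out as the crux; your counterexample concern about the full block $\CC_0$ is exactly why the paper restricts $\CD_0$ to sums of trivial modules. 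For the superdimension you invoke the Euler--Poincar\'e principle $\mathrm{sdim}(M) = \mathrm{sdim}(H(M))$ on finite-dimensional objects, whereas the paper gets $\mathrm{sdim}(M_e) = 0$ for free from the parity structure of each simple; the two are equivalent here, but note the paper's route also makes transparent why the $\mathrm{sdim}$ claim is stated only for $M$ in $\CD$ (for countable sums in $\overline{\CD}$ the even and odd parts of $M_e$ can both be infinite-dimensional, so only the cohomological statement survives). In short: same skeleton (eigenspace decomposition, kill $e \neq 0$, trivial action on $M_0$), but your homotopy-plus-Euler-characteristic packaging is a genuinely different and slightly more general way to execute the two nontrivial steps.
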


\begin{proof}
	Let $e\neq 0 $, then a simple object $M_{n, e}$ in $\CC_e$ is two dimensional.
	It is generated by a highest-weight vector $v$, such that $Nv = nv$, $Ev=ev$ and $\psi^+v =0$.
	The module $M_{n,e}$ is spanned by $v, \psi^-v$ and hence has super dimension zero.
	Moreover, $\psi^+\psi^-v = [\psi^+, \psi^-]v = Ev = ev$ and so  $H(M_{n, e}) = 0$.
	
	For a general object $M$ of $\overline{\CD}$, because $\overline{\CD}$ is semisimple it follows that the submodule $M_e$ for $e\neq 0$ is a countable direct sum of $M_{n,e}$ (typically with different values of $n$ appearing) and hence $H(M_e) = 0$ and  $\text{sdim}(M_e) = 0$.
	As the action of $\psi^+$ is trivial on the submodule $M_0$ we conclude
	\[
		H(M) = \bigoplus_e H(M_e) = H(M_0) = M_0
	\]
	and additionally if $M$ in $\CD$, $\text{sdim}(M) = \text{sdim}(M_0)$.
\end{proof}

\subsection{Formalization of the deformation problem}

In this section we formalize our problem.
Let $\fg$ be a Lie superalgebra with non-degenerate bilinear form $\kappa$ and let $\fa$ be a subalgebra of $\fg \otimes_\C \C[x]$. Let $\fa_\epsilon = \fa \otimes_{\C[x]} \C[x]/(x-\epsilon)$ be a 1-parameter family of subalgebras; set $\fp := \fa_0$.
We denote by $\wh{\fg}$ the central extension
\[
	0 \to \C K \to \wh{\fg} \to \fg \otimes \C[t^{\pm 1}] \to 0
\]
induced by the bilinear form $\kappa$ and similarly denote $\wh{\fp} = \fp \otimes \C[t^{\pm 1}]$.
Let $V^k(\fg)$ be the universal affine VOA of $\fg$ at level $k\in \C$ associated to $\kappa$ and let $V^k(\fa_\epsilon)$, $V^k(\fp)$ be the affine subVOAs associated to $\fa_\epsilon$, $\fp$.
For $I$ an ideal in $V^k(\fg)$, we set $I_\epsilon = I \cap V^k(\fa_\epsilon)$ and $I_\fp := I_0$.
Denote the quotients by the corresponding ideals by $L_k(\fg), L_k(\fa_\epsilon), L_k(\fp)$.
There is thus a short exact sequence
\[
	0 \rightarrow I_\fp \rightarrow V^k(\fp) \rightarrow L_k(\fp) \rightarrow 0.
\]
Assume that there is an embedding $\iota: \fgl_{1|1} \hookrightarrow \fg$ such that $\fgl_{1|1}$ acts on $\fp$
\[
	[\iota(x), y] \in \fp \qquad \forall \ x \in \fgl_{1|1}, \ y \in \fp. 
\]
This action extends in the obvious way to $\wh\fp$.

\begin{lem}
	\label{lem:SES}
	With the above set-up, if $\fp$ is an object in $\CD$, then
	\[
		0 \rightarrow H(I_\fp) \rightarrow H(V^k(\fp)) \rightarrow H(L_k(\fp)) \rightarrow 0
	\]
	is an exact sequence of $H(V^k(\fp))$-modules.
\end{lem}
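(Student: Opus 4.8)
The plan is to show that $H$ applied to each term of the sequence extracts its $E$-weight-zero subspace, so that exactness becomes automatic: passing to a fixed $E$-eigenspace is an exact operation on $E$-semisimple modules, and the connecting maps one would expect from a cohomological long exact sequence never appear precisely because Lemma~\ref{lem:coho} realizes $H(M)$ as an eigenspace rather than a genuine subquotient. To set this up, note first that $\psi^+$ is odd with $[\psi^+,\psi^+]=0$ in $\fgl_{1|1}$, so its zero mode acts on any module by a square-zero odd operator and $H$ is honestly the cohomology of $\psi^+$. The zero modes of $\iota(\fgl_{1|1})$ act on $V^k(\fg)$ by derivations of the vertex algebra structure; the hypothesis $[\iota(x),\fp]\subseteq\fp$ ensures they preserve $V^k(\fp)$, and any ideal of $V^k(\fg)$ is stable under all modes of $V^k(\fg)$, hence under these zero modes. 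Therefore $I_\fp = I\cap V^k(\fp)$ is $\fgl_{1|1}$-stable, and all maps in the sequence $0\to I_\fp\to V^k(\fp)\to L_k(\fp)\to 0$ are $\fgl_{1|1}$-equivariant, commuting in particular with $E$ and with $\psi^+$.

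Next I would verify that the three modules are objects of $\overline{\CD}$, so that Lemma~\ref{lem:coho} applies. By the PBW theorem, $V^k(\fp)$ is, as a module for the zero modes of $\iota(\fgl_{1|1})$, the (super)symmetric algebra on $\fp\otimes t^{-1}\C[t^{-1}]=\bigoplus_{j\geq1}\fp\otimes t^{-j}$, and each summand is a copy of $\fp$ as a $\fgl_{1|1}$-module; hence $V^k(\fp)$ is a countable direct sum of direct summands of tensor powers of $\fp$. Since $\fp$ is an object of $\CD$ and $\CD$ is semisimple and closed under tensor products, every such summand lies in $\CD$, and after completion $V^k(\fp)$ lies in $\overline{\CD}$. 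The submodule $I_\fp$ and the quotient $L_k(\fp)$ then lie in $\overline{\CD}$ by semisimplicity.

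Lemma~\ref{lem:coho} now gives $H(V^k(\fp))=V^k(\fp)_0$, $H(I_\fp)=(I_\fp)_0$ and $H(L_k(\fp))=L_k(\fp)_0$, where the subscript denotes the $E$-weight-zero eigenspace. Because the equivariant maps respect the $E$-eigenspace decomposition, the sequence splits as a direct sum over $E$-eigenvalues of short exact sequences of eigenspaces, and extracting the $E=0$ summand produces the asserted exact sequence. Finally, $E$ acting as a derivation makes $V^k(\fp)_0$ closed under the vertex operations and $(I_\fp)_0$, $L_k(\fp)_0$ modules over it, so the resulting sequence is one of $H(V^k(\fp))$-modules with module homomorphisms, by functoriality of $H$.

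I expect the main obstacle to be the second step --- confirming that the affinized objects really lie in $\overline{\CD}$, i.e. that the $E$-action remains semisimple with eigenvalues in $\R_{\geq0}$ after affinization and after passing to sub- and quotient modules. Via the PBW identification this reduces precisely to tensor-closure of $\CD$, which is exactly where the hypothesis $\fp\in\CD$ is used.
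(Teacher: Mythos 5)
Your proof is correct and takes essentially the same route as the paper's: extend the $\fgl_{1|1}$-action to the affine setting, check that $V^k(\fp)$, $I_\fp$, and $L_k(\fp)$ are objects of $\overline{\CD}$, apply Lemma \ref{lem:coho} to identify $H$ of each with its $E$-weight-zero eigenspace, and conclude exactness (and the module structure) from semisimplicity of the $E$-action together with the fact that the $E$-weight-zero subspace of $V^k(\fp)$ is a vertex subalgebra. Your PBW/tensor-power verification that $V^k(\fp)$ lies in $\overline{\CD}$ is actually spelled out in more detail than in the paper, which asserts the analogous facts for $\wh\fp$, $I$, and $I_\fp$ directly.
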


\begin{proof}
	$\iota$ extends to an embedding of $\fgl_{1|1}$ into $\wh\fg$ via the identification of $\fg$ with the horizontal subalgebra of $\wh\fg$.
	Since $\fp$ is an object in $\CD$, it follows that $\wh\fp$ is an object in $\overline{\CD}$.
	Since via this embedding any $\wh\fg$-module becomes a $\fgl_{1|1}$-module, we see that $I$ also belongs to $\overline{\CD}$.
	Moreover, $I_\fp = I \cap V^k(\fp)$ is a $\fgl_{1|1}$-module because $\fp$ is preserved by the $\fgl_{1|1}$ action and since $\overline{\CD}$ is closed under submodules it too is an object in $\overline{\CD}$.
	
	We now apply Lemma \ref{lem:coho}.
	The cohomology of $I_\fp, V^k(\fp)$ with respect to $\psi^+$ is just its subspace of $E$-weight zero, since $\fgl_{1|1}$ acts semisimply on $V^k(\fp)$ the same must be true for $L_k(\fp)$.
	The claim follows since the $E$-weight zero subspace of $V^k(\fp)$ is a subalgebra of $V^k(\fp)$.
\end{proof}

\begin{cor}
	\label{cor:supercharacters}
	With the above set-up, if $\fp$ is an object in $\CD$ and if every element in the $E$-weight zero subspace of $I_\fp$ deforms to an element in $I_\epsilon$, then supercharacters coincide,
	\[
		\text{sch}[H(L_k(\fp))] =  \text{sch}[L_k(\fp)] = \text{sch}[L_k(\fa_\epsilon)].
	\]
\end{cor}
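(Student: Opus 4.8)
The plan is to assemble the corollary from the exact sequence of Lemma \ref{lem:SES} together with a deformation argument that controls the cohomology of the ideal $I_\fp$. First I would read off the second equality directly: the hypothesis that every $E$-weight-zero element of $I_\fp$ deforms to an element of $I_\epsilon$ is precisely what lets us identify $L_k(\fp)$ and $L_k(\fa_\epsilon)$ at the level of the relevant graded pieces, so that $\mathrm{sch}[L_k(\fp)] = \mathrm{sch}[L_k(\fa_\epsilon)]$. Since taking the supercharacter is insensitive to the deformation parameter $\epsilon$ (the underlying graded vector space does not jump), the family $\{\fa_\epsilon\}$ has constant supercharacter, and I would make this the anchor for the second equality.

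The core of the argument is the first equality, $\mathrm{sch}[H(L_k(\fp))] = \mathrm{sch}[L_k(\fp)]$. Here I would invoke Lemma \ref{lem:coho}: because $\fp$ is an object in $\CD$ and hence $V^k(\fp)$, $I_\fp$, and $L_k(\fp)$ all lie in $\overline{\CD}$ (as established in the proof of Lemma \ref{lem:SES}), the supercharacter of any such module equals the supercharacter of its $E$-weight-zero subspace, which is in turn the $\psi^+$-cohomology. Concretely, $\mathrm{sch}[M] = \mathrm{sch}[H(M)]$ for each of the three terms, since the $E$-nonzero blocks contribute super-dimension zero. Applying this to $L_k(\fp)$ immediately yields $\mathrm{sch}[H(L_k(\fp))] = \mathrm{sch}[L_k(\fp)]$, so this equality is essentially a formal consequence of the cohomology vanishing.

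To make the whole chain rigorous I would then reconcile the two pieces through the exact sequence
\[
	0 \rightarrow H(I_\fp) \rightarrow H(V^k(\fp)) \rightarrow H(L_k(\fp)) \rightarrow 0
\]
of Lemma \ref{lem:SES}, which gives the additive relation $\mathrm{sch}[H(L_k(\fp))] = \mathrm{sch}[H(V^k(\fp))] - \mathrm{sch}[H(I_\fp)]$ on supercharacters. The analogous relation for $L_k(\fa_\epsilon)$ reads $\mathrm{sch}[L_k(\fa_\epsilon)] = \mathrm{sch}[V^k(\fa_\epsilon)] - \mathrm{sch}[I_\epsilon]$, and since the universal VOAs $V^k(\fp)$ and $V^k(\fa_\epsilon)$ are built on the same underlying graded space they share a supercharacter; after passing to $E$-weight zero via Lemma \ref{lem:coho} we get $\mathrm{sch}[H(V^k(\fp))] = \mathrm{sch}[V^k(\fa_\epsilon)]$. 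It then remains to match the ideals, i.e.\ to show $\mathrm{sch}[H(I_\fp)] = \mathrm{sch}[I_\epsilon]$.

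I expect this last matching to be the main obstacle, and it is exactly where the deformation hypothesis does its work. The point is that $H(I_\fp)$ is the $E$-weight-zero subspace of $I_\fp$, and the hypothesis guarantees that each such element deforms to a genuine element of $I_\epsilon$; I would argue this deformation map is injective and that, by upper-semicontinuity of the dimension of $I_\epsilon$ in the family (equivalently, lower-semicontinuity for the quotient $L_k$), it is in fact a graded isomorphism of supercharacters. The subtlety is ensuring no elements of the ideal are ``lost'' or ``gained'' as $\epsilon \to 0$ beyond what the cohomology already accounts for; controlling this requires that the $E$-grading be compatible with the deformation, so that the weight-zero part of the ideal is precisely what survives. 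Once $\mathrm{sch}[H(I_\fp)] = \mathrm{sch}[I_\epsilon]$ is secured, combining the two additive relations gives $\mathrm{sch}[H(L_k(\fp))] = \mathrm{sch}[L_k(\fa_\epsilon)]$, and together with the cohomology equality the full chain of identities follows.
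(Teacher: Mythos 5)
Your skeleton coincides with the paper's: the first equality is exactly the paper's application of Lemma \ref{lem:coho} to the conformal-weight subspaces of $L_k(\fp)$, and for the second equality you make the same reduction the paper does, namely that since $\text{sch}[V^k(\fp)] = \text{sch}[V^k(\fa_\epsilon)]$ is $\epsilon$-independent it suffices to match the supercharacters of the ideals. The genuine gap is in how you close that last step. You claim the deformation map $H(I_\fp) \to I_\epsilon$ is injective and, by semicontinuity of $\dim I_\epsilon$, ``in fact a graded isomorphism.'' This cannot work: semicontinuity bounds the graded dimensions of $I_\epsilon$ by those of the \emph{full} ideal $I_\fp$, not by those of its $E$-weight-zero part, so you only get the sandwich $\dim H(I_\fp) \leq \dim I_\epsilon \leq \dim I_\fp$, which does not close. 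Indeed, in the intended application no graded isomorphism exists: $H(I_\fp)$ is the purely even ideal defining the principal subspace of $L_k(\fsl_{2n})$, while $I_\epsilon$ is the maximal ideal of (a homomorphic image of) $V^k(\fosp_{1|2n})$ and has nonzero odd graded components; only the signed dimensions, i.e. the supercharacters, agree. Your proposed repair --- demanding that ``the $E$-grading be compatible with the deformation'' --- is also unavailable: Remark \ref{rmk:gl11} notes precisely that this copy of $\fgl_{1|1}$ does not preserve $\fk_\epsilon$ for $\epsilon \neq 0$, so $I_\epsilon$ carries no $E$-grading at all.

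The paper closes the step without any isomorphism claim, working purely with supercharacters: the $\epsilon \to 0$ limit of every nonzero element of $I_\epsilon$ is a nonzero element of $I_\fp$, so $\text{sch}[I_\epsilon]$ equals the supercharacter of a limit subspace of $I_\fp$; by hypothesis that subspace contains the entire $E$-weight-zero part of $I_\fp$; and by Lemma \ref{lem:coho} only that part contributes to $\text{sch}[I_\fp]$, since the nonzero-$E$-weight part has vanishing superdimension in each graded piece. Equality of the supercharacters of $I_\epsilon$ and $I_\fp$ (hence of the quotients) follows with no assertion that the underlying graded spaces match. One further caution: your opening paragraph anchors the second equality on the claim that ``the underlying graded vector space does not jump'' in the family --- for the quotients $L_k(\fa_\epsilon)$ this is exactly what must be proven (the ideal $I_\epsilon$ can and does jump at $\epsilon = 0$); it is only the universal objects $V^k(\fa_\epsilon)$ whose supercharacter is manifestly $\epsilon$-independent, as you correctly use later in your third paragraph.
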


\begin{proof}
	The first equality is a simple consequence of Lemma \ref{lem:coho} applied to the subspaces of $L_k(\fp)$ with a given conformal weight, which are objects in $\CD$.
	To see the second equality, it suffices to compare the supercharacters of the ideals $I_\fp = I_0$ and $I_\epsilon$ for $\epsilon \neq 0$ --- if the supercharacters of these ideals agree, then so too must the quotients because the character of $V^k(\fa_\epsilon)$ doesn't depend on $\epsilon$.
	
	To see that the supercharacter of $I_\epsilon$ doesn't depend on $\epsilon$, we note that the $\epsilon \to 0$ limit of every nonzero element of $I_\epsilon$ is a nonzero element of $I_0 = I_\fp$, although it may be that not all elements of $I_0$ can be obtained via such a limit.
	Nonetheless, as only the $E$-weight zero subspace of $I_0$ contributes to its supercharacter and we assume that every element thereof deforms, we can conclude that the supercharacters must agree.
\end{proof}

\section{Supercharacter formulae}
\label{sec:supercharacters}
In this section we affinize the deformation of $\fosp_{1|2n}$ into the principal subspace of $\fsl_{1|2n+1}$ and describe the resulting character identities, following \cite{Stoyanovsky1998}.
For this section we fix positive integers $n, k > 0$.

\subsection{Deforming the principal subspace of $\fsl_{1|2n+1}$}
Let $\fp$ denote the principal subalgebra of $\fg = \fsl_{1|2n+1}$.
We will work with the following basis for $\fsl_{1|2n+1}$.
We denote by $\Phi$ the set of roots of $\fsl_{2n+1}$ with respect to the Cartan subalgebra of diagonal matrices, $\Phi^+$ the set of positive roots corresponding to the upper nilpotent subalgebra, and $\Delta$ the set of simple roots.
The even generators will be denoted $E$, for the diagonal supermatrix with entries $(1,1,0,\dots, 0)$, $h_i$ the Cartan generators of $\fsl_{2n+1}$ corresponding to the simple roots $\alpha_i$, and $e_{\alpha}$, $f_\alpha$, for $\alpha \in \Phi^+$; the odd generators will be denoted $\psi_a$, $\overline{\psi}^a$, $a = 0, \dots, 2n$.
A basis for the subalgebra $\fp$ is given by the $e_\alpha$ for any $\alpha \in \Phi^+$ together with $\psi_a$ for $a = 1, \dots, 2n$.

\begin{rmk}
	As $\fp$ does not include the element $\psi_0$ corresponding to the odd simple root of $\fsl_{1|2n+1}$, the root corresponding to $\psi_1$ can be thought of as simple, i.e. it cannot be written as the sum of roots contained in the subalgebra $\fp$.
\end{rmk}

For an element $x \in \fg$ we denote by $x_j = x \otimes t^j$ the corresponding element of $\wh{\fg}$.
We will find the following formal series particularly useful: for any $\alpha \in \Phi^+$ we denote
\[
	e_\alpha(z) = \sum_{j \in \Z} e_{\alpha, j} z^{-j-1} \qquad f_\alpha(z) = \sum_{j \in \Z} f_{\alpha, j} z^{-j-1}
\]
and for any $a = 0, \dots 2n$ we denote
\[
	\psi_a(z) = \sum_{j \in \Z} \psi_{a, j} z^{-j-1} \qquad \overline{\psi}^a(z) = \sum_{j \in \Z} \overline{\psi}^a_{j} z^{-j-1}
\]
We also denote $e_i(z) = e_{\alpha_i}(z)$ and $f_i(z) = f_{\alpha_i}(z)$, $i = 1, \dots, 2n$.

Extending the deformation introduced in Section \ref{sec:osp} to the affine setting is straightforward.
We denote by $\wh{\fk}_\epsilon$ the central extension  of $\fk_{\epsilon} \otimes \C[t^{\pm1}]$ by $\C K$ obtained by restricting to $X \in \fk_\epsilon$.
The following result is an immediate consequence of Lemma \ref{lem:ospm2n}.

\begin{cor}
	\label{cor:affineosp}
	The Lie algebra $\wh{\fk}_\epsilon$ tends to $\wh{\fp}$ as $\epsilon \to 0$ and can be identified with $\wh{\fosp}_{1|2n}$ when $\epsilon \neq 0$.
\end{cor}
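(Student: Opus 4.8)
The plan is to transport the finite-dimensional identifications of Lemma \ref{lem:ospm2n} (specialized to $m = 1$) to the loop algebras and then verify that the prescribed central extension behaves as claimed. Lemma \ref{lem:ospm2n} already supplies the isomorphisms $\fk_0 \cong \fp$ and $\fk_\epsilon \cong \fosp_{1|2n}$ for $\epsilon \neq 0$, so the only work lies in the passage to $\wh{\fk}_\epsilon$.

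First I would affinize the finite-dimensional isomorphisms. The isomorphism $\fk_\epsilon \cong \fosp_{1|2n}$ is realized by the $t$-independent change of basis of Sections \ref{sec:sp} and \ref{sec:so} (the passage to the conjugate bases $\phi_a$ and $w_r$), which does not involve the loop variable. Tensoring this isomorphism over $\C$ with $\C[t^{\pm 1}]$ therefore produces an isomorphism $\fk_\epsilon \otimes \C[t^{\pm 1}] \cong \fosp_{1|2n} \otimes \C[t^{\pm 1}]$, and at $\epsilon = 0$ an isomorphism $\fk_0 \otimes \C[t^{\pm 1}] \cong \fp \otimes \C[t^{\pm 1}]$.

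Next I would pin down the central extension. By construction $\wh{\fk}_\epsilon$ is the extension determined by the cocycle $(X \otimes f, Y \otimes g) \mapsto \kappa(X, Y)\,\mathrm{Res}_t(g\, df)$ obtained by restricting the supertrace form $\kappa(X,Y) = \mathrm{str}(XY)$ of $\fsl_{1|2n+1}$ to $\fk_\epsilon$, so everything is controlled by $\kappa|_{\fk_\epsilon}$. When $\epsilon = 0$ the algebra $\fp = \fk_0$ consists of nilpotent strictly upper-triangular supermatrices; any product $XY$ is again strictly upper-triangular, so $\kappa(X,Y) = 0$, the cocycle vanishes, and $\wh{\fk}_0 = \fp \otimes \C[t^{\pm 1}] = \wh{\fp}$. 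When $\epsilon \neq 0$ the elements of $\fk_\epsilon \cong \fosp_{1|2n}$ annihilate $\theta_0$ (its column vanishes) while preserving the nondegenerate form $D$ on the complementary $\C^{1|2n}$; with respect to the filtration $\C\theta_0 \subset \C^{1|2n+1}$ such a supermatrix is block upper-triangular with vanishing $\C\theta_0$-block, whence $\mathrm{str}_{\C^{1|2n+1}}(XY) = \mathrm{str}_{\C^{1|2n}}(X' Y')$ for $X', Y'$ the induced action on the defining representation $\C^{1|2n}$. Thus $\kappa|_{\fk_\epsilon}$ is exactly the supertrace form of $\fosp_{1|2n}$ in its defining representation, which identifies $\wh{\fk}_\epsilon$ with $\wh{\fosp}_{1|2n}$.

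Finally, the statement that $\wh{\fk}_\epsilon$ tends to $\wh{\fp}$ as $\epsilon \to 0$ is the assertion that $\{\wh{\fk}_\epsilon\}$ is a flat family of Lie superalgebras with special fiber $\wh{\fp}$: since $\fk$ is defined over $\C[x]$, both the bracket structure constants and the cocycle values of $\wh{\fk}_\epsilon$ are polynomial in $\epsilon$ and specialize at $\epsilon = 0$ to those of $\wh{\fp}$ by the computation above. I expect the only nontrivial point to be this computation of $\kappa|_{\fk_\epsilon}$. For the bare isomorphism of Lie superalgebras it is in fact soft, since uniqueness of the invariant form on the simple superalgebra $\fosp_{1|2n}$ already forces $\kappa|_{\fk_\epsilon}$ to be a scalar multiple of the standard one; the finer content, that this scalar is the one making the levels agree, is precisely what the defining-representation supertrace identity records and is what will be needed in Section \ref{sec:supercharacters}.
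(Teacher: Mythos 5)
Your proposal is correct and follows the same route as the paper: the paper offers no separate argument for this corollary, simply declaring it an immediate consequence of Lemma \ref{lem:ospm2n}, and your proof is exactly that deduction with the omitted details supplied (affinize the $t$-independent finite-dimensional isomorphisms, then track the central extension). Your verification that the restricted cocycle vanishes on $\fk_0 = \fp$ (products of strictly upper-triangular supermatrices are supertraceless) and agrees with the defining-representation supertrace form on $\fk_\epsilon \cong \fosp_{1|2n}$ via the block-triangular structure is precisely the content the paper leaves implicit in the phrase ``obtained by restricting to $X \in \fk_\epsilon$'', and it is the part that matters for the level identifications used later.
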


\begin{rmk}
	\label{rmk:gl11}
	We note that this example fits into the general setting of Section \ref{sec:DufloSerganova}.
	The embedding of $\fgl_{1|1}$ into $\fg = \fsl_{1|2n+1}$ is given the bosonic supermatrices
	\[
		N = \left(\begin{array}{c | c c c c}
			\tfrac{2n+1}{2n} & 0 & 0 & \dots & 0\\ \hline
			0 & \tfrac{1}{2n} & 0 & \dots & 0\\
			0 & 0 & \tfrac{1}{2n} & \dots & 0\\
			\vdots & \vdots & \vdots & \ddots & \vdots\\
			0 & 0 & 0 & \dots & \tfrac{1}{2n}\\
		\end{array}\right) \qquad E = \left(\begin{array}{c | c c c c}
			1 & 0 & 0 & \dots & 0\\ \hline
			0 & 1 & 0 & \dots & 0\\
			0 & 0 & 0 & \dots & 0\\
			\vdots & \vdots & \vdots & \ddots & \vdots\\
			0 & 0 & 0 & \dots & 0\\
		\end{array}\right)
	\]
	and the fermionic supermatrices
	\[
		\psi^+ = \left(\begin{array}{c | c c c c}
			0 & 1 & 0 & \dots & 0\\ \hline
			0 & 0 & 0 & \dots & 0\\
			0 & 0 & 0 & \dots & 0\\
			\vdots & \vdots & \vdots & \ddots & \vdots\\
			0 & 0 & 0 & \dots & 0\\
		\end{array}\right) \qquad \psi^- = \left(\begin{array}{c | c c c c}
			0 & 0 & 0 & \dots & 0\\ \hline
			1 & 0 & 0 & \dots & 0\\
			0 & 0 & 0 & \dots & 0\\
			\vdots & \vdots & \vdots & \ddots & \vdots\\
			0 & 0 & 0 & \dots & 0\\
		\end{array}\right)\,.
	\]
	It is clear that the principal subspace $\fp$ indeed belongs to $\CD$.
	We note that this copy of $\fgl_{1|1}$ does not preserve $\fk_\epsilon$ for general $\epsilon$.
\end{rmk}

Before proving our main result, we provide a concrete description of $L_k(\fk_\epsilon)$ for nonzero $\epsilon$, which will follow from the following simple lemma.
\begin{lem}
	\label{lem:sln}
	For $N>1$ and $k \in \Z$,  $L_k(\fsl_N)$ is a vertex subalgebra of $L_k(\fsl_{1|N})$.
\end{lem}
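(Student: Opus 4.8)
The plan is to realize $L_k(\fsl_N)$ as the image of the universal affine VOA $V^k(\fsl_N)$ under the composite $V^k(\fsl_N) \to V^k(\fsl_{1|N}) \to L_k(\fsl_{1|N})$ and then to show that this image is already the simple quotient. First I would fix the embedding of the even subalgebra $\fsl_N \hookrightarrow \fsl_{1|N}$, placing $\fsl_N$ in the lower-right block, and verify that levels match. Normalizing the invariant supersymmetric form on $\fsl_{1|N}$ so that the even roots have squared length $2$ — that is, taking $\kappa = -\mathrm{str}$ in the defining representation — its restriction to the lower-right block equals $+\mathrm{tr}$, the standard form on $\fsl_N$, because a supertrace over the odd block carries the opposite sign. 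Hence the induced map of affinizations $\wh{\fsl}_N \hookrightarrow \wh{\fsl}_{1|N}$ sends $K \mapsto K$, preserving the level, and a PBW argument shows the induced map $V^k(\fsl_N) \hookrightarrow V^k(\fsl_{1|N})$ of universal VOAs is injective.

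Composing with the projection onto $L_k(\fsl_{1|N})$ produces a vertex superalgebra map $\phi \colon V^k(\fsl_N) \to L_k(\fsl_{1|N})$ whose image $W$ is a quotient of $V^k(\fsl_N)$. The key reduction is that it suffices to show $\phi$ annihilates the \emph{unique} maximal ideal $\mathfrak{m} \subset V^k(\fsl_N)$ (recall that the vacuum module $V^k(\fsl_N)$ has a unique maximal proper submodule). Indeed, once $\phi(\mathfrak{m}) = 0$ the map factors as $\overline{\phi} \colon L_k(\fsl_N) \to L_k(\fsl_{1|N})$; since $\overline{\phi}(\mathbf{1}) = \mathbf{1} \neq 0$ and $L_k(\fsl_N)$ is simple, the kernel of $\overline{\phi}$ is a proper ideal and hence zero, so $\overline{\phi}$ is injective and identifies $W$ with $L_k(\fsl_N)$ as a vertex subalgebra. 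For $k \in \Z_{>0}$ the ideal $\mathfrak{m}$ is generated by the single singular vector $(e_\theta)_{-1}^{k+1}\mathbf{1}$, where $\theta$ is the highest root of $\fsl_N$, so everything comes down to the vanishing of this one vector in $L_k(\fsl_{1|N})$.

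I expect this last vanishing to be the crux of the argument, and I would establish it via integrability of the even subalgebra. The module $L_k(\fsl_{1|N})$ is the irreducible highest-weight module $L(k\Lambda_0)$, and the even affine subalgebra $\wh{\fsl}_N$ is generated by the Chevalley generators attached to the even finite simple roots $\alpha_1,\dots,\alpha_{N-1}$ of $\fsl_N$ together with the affine node $\alpha_0 = \delta - \theta$ (note $e_\theta$ is even, lying in the odd–odd block, so $\wh{\fsl}_2^{\theta}$ is a genuinely even affine subalgebra). On $k\Lambda_0$ the coroot pairings are $0,\dots,0,k$, so $k\Lambda_0$ restricts to a dominant integral weight for $\wh{\fsl}_N$ exactly when $k \geq 0$; by the Kac–Wakimoto integrability theory the even real root vectors then act locally nilpotently on $L(k\Lambda_0)$. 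Applying this to the $\fsl_2$-triple attached to $\alpha_0$, whose lowering operator is precisely $(e_\theta)_{-1}$ and for which $\mathbf{1}$ is a highest-weight vector of $\alpha_0^\vee$-eigenvalue $k$, forces $(e_\theta)_{-1}^{k+1}\mathbf{1} = 0$, as required.

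The main obstacle is thus isolating and invoking the correct integrability statement for the even part of $L_k(\fsl_{1|N})$; once that is in hand, together with the form-compatibility and the unique-maximal-ideal facts, the remaining steps are formal. The case $k = 0$ is trivial, since then $L_0(\fsl_N) = \C = L_0(\fsl_{1|N})$, so the argument as organized above secures the statement for all $k \in \Z_{\geq 0}$, which covers the positive integer levels relevant to the sequel.
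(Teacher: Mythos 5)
Your overall skeleton is sound and genuinely different from the paper's route: matching the level via $-\mathrm{str}$, factoring through the unique maximal ideal of $V^k(\fsl_N)$, and reducing everything (for $k \in \Z_{>0}$) to the single vanishing $(e_\theta)_{-1}^{k+1}\mathbf{1} = 0$ in $L_k(\fsl_{1|N})$ are all correct steps; indeed that vanishing is \emph{equivalent} to the lemma. The gap is precisely at that point: you dispatch it with ``dominant integral restriction plus Kac--Wakimoto integrability theory,'' but no such implication can be cited off the shelf for affine superalgebras, and the standard Kac--Moody argument that would prove it breaks down here. The reason is that $\delta - \theta$ is \emph{not} a simple root of $\wh{\fsl}_{1|N}$ in the distinguished Borel: the affine simple root is $\delta - (\epsilon - \delta_N)$, attached to the \emph{odd} highest root $\epsilon - \delta_N$ of $\fsl_{1|N}$ (in $\epsilon$--$\delta$ notation, with $\theta = \delta_1 - \delta_N$ the highest root of the even $\fsl_N$). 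Consequently the would-be singular vector is not singular: the odd simple raising operator satisfies $[e_{\epsilon-\delta_1}, e_\theta] = e_{\epsilon-\delta_N} \neq 0$, whence
\[
e_{\epsilon-\delta_1}\,(e_\theta)_{-1}^{k+1}\mathbf{1} \;=\; (k+1)\,(e_\theta)_{-1}^{k}\,(e_{\epsilon-\delta_N})_{-1}\mathbf{1}\,,
\]
which is nonzero in $V^k(\fsl_{1|N})$ by PBW. So simplicity of $L_k(\fsl_{1|N})$ alone does not force the vanishing, and integrability of $L(k\Lambda_0)$ over the even affine subalgebra is exactly as hard as the lemma itself; as written, your proof assumes its own crux.

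The step can be closed in two ways. The paper avoids integrability entirely: it cites the $k=1$ case ($L_1(\fsl_N) \subset L_1(\fsl_{1|N})$, Theorem 5.5 of CKLR19) and then uses the diagonal embedding $L_k(\fsl_N) \hookrightarrow L_1(\fsl_N)^{\otimes k} \hookrightarrow L_1(\fsl_{1|N})^{\otimes k}$; the diagonal currents generate a homomorphic image of $V^k(\fsl_{1|N})$ containing this copy of $L_k(\fsl_N)$, and since $L_k(\fsl_{1|N})$ is a further quotient of that image while $L_k(\fsl_N)$ is simple, the copy survives in the simple quotient. (In your language: in $L_1(\fsl_{1|N})^{\otimes k}$ every term of the multinomial expansion of $(e_\theta)_{-1}^{k+1}\mathbf{1}$ contains a square acting in some tensor factor, where it vanishes by the level-one case.) Alternatively, your formal argument can be repaired by changing the Borel, i.e.\ by odd reflections: take the simple system of $\fsl_{1|N}$ coming from the ordering $\delta_1, \epsilon, \delta_2, \dots, \delta_N$, for which the highest root of the \emph{superalgebra} is the even root $\theta = \delta_1 - \delta_N$, so that $\delta - \theta$ becomes an honest even affine simple root. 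The vacuum vector is annihilated by all of $\fsl_{1|N}\otimes\C[t]$, hence is a highest-weight vector of weight $k\Lambda_0$ for this Borel as well, and now $(e_\theta)_{-1}^{k+1}\mathbf{1}$ is annihilated by every simple raising operator (the finite ones because $\theta$ is highest, the affine one by the usual $\fsl_2$ computation), hence vanishes by simplicity. Either patch completes your argument; without one of them, the appeal to ``integrability theory'' is a hole rather than a citation.
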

\begin{proof}
	The statement is proven for $k=1$ in Theorem 5.5 of \cite{CKLR19}, see also \cite{KW01}. For $k>1$ one has the well-known embedding $\iota: L_k(\fsl_N) \hookrightarrow  L_1(\fsl_{N})^{\otimes k} \hookrightarrow L_1(\fsl_{1|N})^{\otimes k}$. 
	For $x$ in $\fsl_{1|N}$ let $J^x_a$ be the corresponding field in the $a$-th  factor of $L_1(\fsl_{1|N})^{\otimes k}$, so that $J^x_1 + \dots + J^x_k$ generates a homomorphic image of $V^k(\fsl_{1|N})$. 
	Then the image of $\iota$ is generated by the $J^x_1 + \dots + J^x_k$ for $x \in \fsl_N \subset \fsl_{1|N}$ 
	and so $L_k(\fsl_N)$ is in particular a vertex subalgebra of the simple quotient $L_k(\fsl_{1|N})$.
\end{proof}

\begin{cor}
	\label{cor:osp}
	For $\epsilon \neq 0$ the quotient $L_k(\fk_{\epsilon})$ is isomorphic to $L_k(\fosp_{1|2n})$.
\end{cor}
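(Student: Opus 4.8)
The plan is to deduce this from Corollary \ref{cor:affineosp} by the same tensor-power argument used to prove Lemma \ref{lem:sln}. By Corollary \ref{cor:affineosp}, for $\epsilon \neq 0$ the identification $\wh{\fk}_\epsilon \cong \wh{\fosp}_{1|2n}$ identifies the affine subVOA $V^k(\fk_\epsilon) \subset V^k(\fsl_{1|2n+1})$ with $V^k(\fosp_{1|2n})$ at the same level $k$; in particular the level normalization (i.e. that the restriction of $\kappa$ to $\fosp_{1|2n}$ yields standard level $k$) is fixed by that identification, so no separate computation is needed. By the definition of $I_\epsilon = I \cap V^k(\fk_\epsilon)$, the quotient $L_k(\fk_\epsilon)$ is a nonzero quotient of $V^k(\fosp_{1|2n})$ that embeds into the simple VOSA $L_k(\fsl_{1|2n+1})$ as the vertex subalgebra generated by the image of $\fosp_{1|2n}$. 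It therefore suffices to prove that this quotient is simple, i.e. that $I_\epsilon$ is the maximal ideal of $V^k(\fosp_{1|2n})$.

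First I would establish the base case $k = 1$: the subalgebra of $L_1(\fsl_{1|2n+1})$ generated by $\fosp_{1|2n}$ is the simple VOSA $L_1(\fosp_{1|2n})$. This is the orthosymplectic analogue of the input to Lemma \ref{lem:sln}, where the embedding $L_1(\fsl_N) \hookrightarrow L_1(\fsl_{1|N})$ is supplied by \cite{CKLR19}. For general $k$ I would then compose the standard diagonal embedding $L_k(\fsl_{1|2n+1}) \hookrightarrow L_1(\fsl_{1|2n+1})^{\otimes k}$ already used in Lemma \ref{lem:sln} with the base case applied in each tensor factor: under the diagonal the generators $x \in \fosp_{1|2n}$ map to $J^x_1 + \dots + J^x_k$, which by the base case generate a copy of the simple $L_k(\fosp_{1|2n})$ inside $L_1(\fosp_{1|2n})^{\otimes k} \subset L_1(\fsl_{1|2n+1})^{\otimes k}$ (using the diagonal embedding $L_k(\fosp_{1|2n}) \hookrightarrow L_1(\fosp_{1|2n})^{\otimes k}$ discussed below). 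Since $L_k(\fk_\epsilon)$ is generated by these very same currents inside $L_k(\fsl_{1|2n+1})$, the two subalgebras coincide and $L_k(\fk_\epsilon) \cong L_k(\fosp_{1|2n})$.

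I expect the base case $k = 1$ to be the main obstacle. Unlike the $\fsl_N \subset \fsl_{1|N}$ situation, where \cite{CKLR19} guarantees that $\fsl_N$ generates the full simple quotient, here one must rule out that $\fosp_{1|2n}$ generates only a proper, non-simple quotient of $V^1(\fosp_{1|2n})$ inside $L_1(\fsl_{1|2n+1})$. I would attack this either by invoking an explicit free-field realization of $L_1(\fsl_{1|2n+1})$ in which the $\fosp_{1|2n}$ currents visibly generate the simple affine VOSA, or by checking that the singular vector(s) generating the maximal ideal of $V^1(\fosp_{1|2n})$ map to zero in $L_1(\fsl_{1|2n+1})$. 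The passage to general $k$ additionally relies on the diagonal embedding $L_k(\fosp_{1|2n}) \hookrightarrow L_1(\fosp_{1|2n})^{\otimes k}$; this should hold because $\fosp_{1|2n}$, like a simple Lie algebra, has nonzero dual Coxeter number and completely reducible finite-dimensional representation theory, so the level-one parafermion-type construction goes through exactly as in the even case.
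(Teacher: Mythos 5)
Your reduction of general $k$ to $k=1$ is sound in outline (granting the $\fosp_{1|2n}$ analogue of the diagonal embedding $L_k(\fosp_{1|2n}) \hookrightarrow L_1(\fosp_{1|2n})^{\otimes k}$, which does hold but itself rests on Kac-type integrability/complete-reducibility theory for $\wh{\fosp}_{1|2n}$ that you only gesture at), but the base case you yourself flag as ``the main obstacle'' is a genuine gap, and not a small one: the claim that $\fk_\epsilon \simeq \fosp_{1|2n}$ generates the \emph{simple} quotient $L_1(\fosp_{1|2n})$ inside $L_1(\fsl_{1|2n+1})$ is precisely Corollary \ref{cor:osp} at $k=1$, so your argument reduces the problem to itself at level one rather than proving it. Neither of your two proposed attacks is carried out, and the second (checking that singular vectors die) is circular in a hidden way: showing that the singular vectors of $V^1(\fosp_{1|2n})$ map to zero only shows the image is a quotient of $V^1(\fosp_{1|2n})$ by the ideal they generate; to conclude simplicity you must know that this quotient is \emph{already} simple, i.e. that the maximal ideal is generated by those singular vectors --- a nontrivial fact about affine $\fosp_{1|2n}$ that you have not supplied.

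The paper sidesteps the level-one problem entirely by working with the even subalgebra. The even part of $\fk_\epsilon$ is the deformed $\fsp_{2n}$, which sits inside $\fsl_{2n+1} \subset \fsl_{1|2n+1}$; Lemma \ref{lem:sln} --- the same tensor-power argument you wanted to redo for $\fosp_{1|2n}$, but applied where its input from \cite{CKLR19} is actually available --- places the corresponding currents inside the simple, integrable $L_k(\fsl_{2n+1})$, so the $(k+1)$st powers of the nilpotent generators vanish and the even currents generate $L_k(\fsp_{2n})$ on the nose. The remaining step, that a quotient of $V^k(\fosp_{1|2n})$ whose even part acts integrably (equivalently, generates the simple $L_k(\fsp_{2n})$) must itself be simple, is Theorem 4.5.2 of \cite{GorelikSerganova}, invoked as in the proof of Theorem 5.3 there. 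That simplicity criterion is exactly the ingredient missing from your proposal: with it, your singular-vector route closes (at every $k$, making the reduction to $k=1$ unnecessary); without it, what you have is a reduction, not a proof.
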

\begin{proof}
	We start by noting that $L_k(\fk_\epsilon)$ is a homomorphic image of $V^k(\fosp_{1|2n})$ due to Corollary \ref{cor:affineosp}, hence contains a homomorphic image of $V^k(\fsp_{2n})$.
	The above Lemma ensures that the $(k+1)$st powers of the nilpotent generators of this subalgebra vanish and it is therefore isomorphic to $L_k(\fsp_{2n})$.
	The corollary is then an immediate consequence of Theorem 4.5.2 of \cite{GorelikSerganova}, as explained in the proof of Theorem 5.3 thereof.
\end{proof}

\subsection{The main theorem}

We now establish a relatively simple variant of the main result of \cite{Stoyanovsky1998}.

\begin{thm}
	\label{thm:characters}
	The supercharacter of $L_k(\fosp_{1|2n})$ is equal to supercharacter of principal subspace of $L_k(\fsl_{1|2n+1})$ and to the character of the principal subspace of $L_k(\fsl_{2n})$.
\end{thm}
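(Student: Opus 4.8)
The plan is to read off all three (super)characters from the Duflo--Serganova machinery of Section \ref{sec:DufloSerganova}, applied to the principal subalgebra $\fp \subset \fsl_{1|2n+1}$ together with the $\fgl_{1|1}$-embedding of Remark \ref{rmk:gl11} and the identification of Corollary \ref{cor:osp}. Once the hypotheses are in place, Corollary \ref{cor:supercharacters} supplies the chain
\[
\text{sch}[H(L_k(\fp))] = \text{sch}[L_k(\fp)] = \text{sch}[L_k(\fk_\epsilon)],
\]
and Corollary \ref{cor:osp} identifies the last term with $\text{sch}[L_k(\fosp_{1|2n})]$, giving the first claimed equality since $L_k(\fp)$ is by construction the principal subspace of $L_k(\fsl_{1|2n+1})$. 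The second equality will then follow from an explicit computation of the $\psi^+$-cohomology $H(L_k(\fp))$. Thus the theorem reduces to three tasks: checking $\fp \in \CD$, computing $H(L_k(\fp))$, and verifying the deformation hypothesis of Corollary \ref{cor:supercharacters}.

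First I would record the $\fgl_{1|1}$-module structure of $\fp$. In the embedding of Remark \ref{rmk:gl11} the central element is $E = \mathrm{diag}(1,1,0,\dots,0)$ and the odd raising operator is $\psi^+ = \psi_0$, the odd simple-root vector, which does \emph{not} lie in $\fp$. A direct bracket computation shows that the even generators $e_{ij}$ with $2 \le i < j \le 2n+1$ carry $E$-weight $0$ and span trivial modules, while the remaining generators $e_{1j}$ and $\psi_a$ ($a \ge 1$) carry $E$-weight $1$ and pair off under $\psi^+$ into $2n$ typical modules in $\CC_1$; hence $\fp \in \CD$. Since $\fp$ has no generator of negative $E$-weight, Lemma \ref{lem:coho} gives $H(L_k(\fp)) = (L_k(\fp))_{E=0}$, and this $E$-weight-zero subspace is spanned by monomials in the modes of the even currents $e_{ij}(z)$, $2 \le i < j \le 2n+1$. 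These generate the principal subspace of the subalgebra $L_k(\fsl_{2n}) \subset L_k(\fsl_{1|2n+1})$ on indices $2, \dots, 2n+1$, whose simplicity follows as in Lemma \ref{lem:sln}. As this subspace is purely even, its supercharacter equals its character, which by Georgiev's theorem is the right-hand side of \eqref{eq:charactersintro}; this establishes the second equality.

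The hard part will be verifying the remaining deformation hypothesis of Corollary \ref{cor:supercharacters}: that every $E$-weight-zero element of $I_\fp$ deforms to an element of $I_\epsilon$. The $E$-weight-zero part of $I_\fp$ is the defining ideal of the $\fsl_{2n}$-principal subspace, generated by the Feigin--Stoyanovsky relations arising from $e_{\alpha_i,-1}^{\,k+1}|0\rangle = 0$ for the simple roots $\alpha_i$ of the $\fsl_{2n}$-block. The strategy is to lift each such relation along the deformation $\fk_0 = \fp \leadsto \fk_\epsilon \cong \fosp_{1|2n}$ of Corollaries \ref{cor:affineosp} and \ref{cor:osp}: the vanishing of the $(k+1)$-st powers of the nilpotent generators of $L_k(\fosp_{1|2n})$, guaranteed by Lemma \ref{lem:sln}, produces elements of $I_\epsilon$ whose $\epsilon \to 0$ limits recover these generators of $I_\fp$. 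Equivalently, since the limit of any nonzero element of $I_\epsilon$ already lies in $I_\fp$, it suffices to show that the $E$-weight-zero part of $I_\fp$ gains no relations in the limit, i.e.\ that the two ideals have equal supercharacter in the $E = 0$ sector; making this explicit for all generators, and confirming that none is lost, is the crux of the argument. With it in hand, Corollary \ref{cor:supercharacters} yields $\text{sch}[L_k(\fp)] = \text{sch}[L_k(\fk_\epsilon)]$, and combining with Corollary \ref{cor:osp} and the cohomology computation above completes the proof.
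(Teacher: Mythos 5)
Your framework coincides with the paper's: reduce to Corollary \ref{cor:supercharacters} via the $\fgl_{1|1}$-embedding of Remark \ref{rmk:gl11}, identify $L_k(\fk_\epsilon) \simeq L_k(\fosp_{1|2n})$ via Corollary \ref{cor:osp}, and recognize the $E$-weight-zero subspace of $V^k(\fp)$ as the universal principal subspace of $\fsl_{2n}$; your $\fgl_{1|1}$-module analysis of $\fp$ and the cohomology computation are correct and match the paper. The genuine gap sits exactly where you place ``the crux'': the verification that every $E$-weight-zero element of $I_\fp$ deforms into $I_\epsilon$. First, you assert that the $E$-weight-zero part of $I_\fp$ is \emph{generated} by the Feigin--Stoyanovsky relations $e_i(z)^{k+1}$. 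This is not a citable fact: it was conjectured by Feigin--Stoyanovsky \cite{StoyanovskyFeigin} for all $\fsl_m$ but, prior to this paper, proven only for $\fsl_2$ (loc.\ cit.) and $\fsl_3$ \cite{Sadowski2}. The paper must prove this presentation in full generality (Theorem \ref{thm:presentation} in Appendix \ref{app:presentation}, using Georgiev's quasiparticle bases \cite{Georgiev} and the method of \cite{ButoracKozic}); without it, deforming individual relations says nothing about arbitrary elements of the ideal, which is what Corollary \ref{cor:supercharacters} requires.

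Second, even granting the presentation, you never exhibit the deformation itself. Saying that ``the vanishing of the $(k+1)$-st powers of the nilpotent generators of $L_k(\fosp_{1|2n})$ produces elements of $I_\epsilon$ whose limits recover these generators'' begs the question of \emph{which} elements of $\fk_\epsilon$ limit to $e_i$: the current $e_i$ itself (for the simple roots of the $\fsl_{2n}$ block) does not lie in $\fk_\epsilon$. The paper's construction is explicit: $e_{i,\epsilon}(z) = e_i(z) - \epsilon^2 f_{i+1}(z)$ for $i = 2, \dots, 2n-1$ and $e_{2n,\epsilon}(z) = e_{\alpha_{2n}}(z)$. These fields have $E$-weight zero and lie in $V^k(\fk_\epsilon)$; since $e_i$ and $f_{i+1}$ commute, $e_{i,\epsilon}$ is a nilpotent element of $\fsl_{2n}$, so by Lemma \ref{lem:sln} (as in \cite{Stoyanovsky1998}) the modes of $e_{i,\epsilon}(z)^{k+1}$ lie in the maximal ideal $I$, hence in $I_\epsilon$, and they manifestly limit to the modes of $e_i(z)^{k+1}$ as $\epsilon \to 0$. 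Your proposed reformulation --- that it suffices to show ``the two ideals have equal supercharacter in the $E=0$ sector'' --- is circular, since equality of those supercharacters is precisely what the deformation hypothesis is designed to deliver. In short, you have set up the right machine but left both of its essential inputs, the presentation theorem and the explicit deformed currents, unproven.
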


\begin{proof}
	In light of Proposition \ref{cor:osp}, it suffices to show that Corollary \ref{cor:supercharacters} applies to this setting.
	Namely, that every element of the $E$-weight zero subspace of $I_\fp$ can be deformed to $I_\epsilon$.
	
	To show this, we first note that the $E$-weight zero subspace of $V^k(\fp)$ is precisely the universal principal subspace of $\fsl_{2n}$.
	This follows from the fact that the $E$-weight zero subspace of $\fp$ is precisely the nilpotent subsalgebra of $\fsl_{2n}$.
	Using Lemma \ref{lem:sln}, it is immediate that the $E$-weight zero subspace of $I_\fp$ is precisely the defining ideal of the principal subspace of $L_k(\fsl_{2n})$.
	The generators of this ideal are stated by Feigin-Stoyanovsky \cite{StoyanovskyFeigin} for all $\fsl_m$ (see also \cite{Sadowski1}) and proven only for $\fsl_2$; the case of $\fsl_3$ is proven by Sadowski \cite{Sadowski2} and we show that this continues to hold for all $m$ in Appendix \ref{app:presentation} following work of Butorac-Ko\v{z}i\'{c} providing an analogous presentation for types $D$, $E$, and $F$ \cite{ButoracKozic}.
	Explicitly, the ideal defining the principal subspace of $L_k(\fsl_{2n})$ is generated by the modes of the fields $e_{i}(z)^{k+1}$.
	
	To complete the proof, we show that each of these generators can be deformed away from $\epsilon = 0$.
	We claim that this is done by simply replacing $e_i(z)^k$ by $e_{i,\epsilon}(z)^{k+1}$, where $e_{i, \epsilon}(z) = e_{i}(z) - \epsilon^2 f_{i+1}(z)$ for $i = 2, \dots 2n-1$, $e_{2n,\epsilon}(z) = e_{\alpha_{2n}}(z)$.
	That these fields correspond to states in the $E$-weight zero subspace of $V^k(\fk_{\epsilon})$ is immediate and so it suffices to show that they belong to the $E$-weight zero subspace of $I$, i.e. the maximal ideal of $L_k(\fsl_{2n})$, but this follows as in \cite{Stoyanovsky1998} from the fact that $e_{i,\epsilon}$ is nilpotent.
\end{proof}

The homology of $\fp$ is isomorphic to the nilpotent subalgebra $\fn$ of $\fsl_{2n}$ and hence $H(V^k(\fp))$ is isomorphic to $V^k(\fn)$.
The last paragraph of this proof shows that the homology $H(I_\fp)$ of the ideal $I_\fp$ defining the principal subspace of $L_k(\fsl_{1|2n+1})$ is precisely the ideal defining the principal subspace of $L_k(\fsl_{2n})$.
Applying Lemma \ref{lem:SES}, we obtain an explicit description of the homology $H(L_k(\fp))$.
\begin{cor}
	The homology $H(L_k(\fp))$ of the principal subspace of $L_k(\fsl_{1|2n+1})$ is isomorphic to the principal subspace of $L_k(\fsl_{2n})$.
\end{cor}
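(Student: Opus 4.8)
The plan is to obtain the description of $H(L_k(\fp))$ directly from the short exact sequence of Lemma \ref{lem:SES}, using the identifications of its two outer terms that are already in hand from the proof of Theorem \ref{thm:characters}. Since $\fp$ belongs to $\CD$ (Remark \ref{rmk:gl11}), Lemma \ref{lem:SES} provides the exact sequence
\[
	0 \rightarrow H(I_\fp) \rightarrow H(V^k(\fp)) \rightarrow H(L_k(\fp)) \rightarrow 0,
\]
so that $H(L_k(\fp)) \cong H(V^k(\fp))/H(I_\fp)$. It therefore suffices to identify the middle term as a vertex algebra together with its ideal $H(I_\fp)$ and then read off the quotient.

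First I would identify the middle term. By Lemma \ref{lem:coho} the cohomology $H(V^k(\fp))$ equals the $E$-weight zero subspace of $V^k(\fp)$, which the proof of Theorem \ref{thm:characters} shows is precisely the universal principal subspace of $\fsl_{2n}$; equivalently, since the homology of $\fp$ is the nilpotent subalgebra $\fn \subset \fsl_{2n}$, one has $H(V^k(\fp)) \cong V^k(\fn)$. This subspace is a genuine vertex subalgebra, as recorded in the proof of Lemma \ref{lem:SES}. Next I would identify the ideal: again by Lemma \ref{lem:coho}, $H(I_\fp)$ is the $E$-weight zero subspace of $I_\fp$, and the final paragraph of the proof of Theorem \ref{thm:characters} shows this to be exactly the defining ideal of the principal subspace of $L_k(\fsl_{2n})$, namely the ideal generated by the modes of the fields $e_i(z)^{k+1}$. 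Combining these with exactness, the quotient $H(V^k(\fp))/H(I_\fp)$ is the universal principal subspace of $\fsl_{2n}$ modulo its maximal ideal, which is by definition the principal subspace of $L_k(\fsl_{2n})$.

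The only subtlety, and the point I would treat most carefully, is that Lemma \ref{lem:SES} delivers the sequence as one of $H(V^k(\fp))$-modules, whereas the corollary asserts an isomorphism of vertex algebras. I expect this upgrade to be formal rather than a genuine obstacle: each term in the sequence is the $E$-weight zero subspace of the corresponding space, and each such subspace is closed under the vertex operations, so the maps are restrictions of vertex algebra morphisms and the induced isomorphism on the quotient respects the vertex structure. Granting this compatibility, the corollary requires no further computation beyond the identifications already established in proving Theorem \ref{thm:characters}.
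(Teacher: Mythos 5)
Your proposal is correct and follows essentially the same route as the paper: identify $H(V^k(\fp))$ with $V^k(\fn)$ via the homology of $\fp$, identify $H(I_\fp)$ with the defining ideal of the principal subspace of $L_k(\fsl_{2n})$ using the last paragraph of the proof of Theorem \ref{thm:characters}, and conclude via the exact sequence of Lemma \ref{lem:SES}. Your extra care about upgrading the module isomorphism to one compatible with the vertex algebra structure is a reasonable elaboration of a point the paper handles implicitly (the proof of Lemma \ref{lem:SES} already notes that the $E$-weight zero subspace is a subalgebra).
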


We end by noting that Conjecture 1.1 (for $p = k$) of \cite{WarnaarZudilin} is a consequence of Theorem \ref{thm:characters}.
Namely, we recognize the right-hand side of this conjecture as the supercharacter of $L_k(\fosp_{1|2n})$, see Appendix \ref{app:osp} for more details, and the left-hand side as the character of the principal subspace of $L_k(\fsl_{2n})$, cf. \cite{StoyanovskyFeigin, Georgiev}.
Equating these $q$-series leads to the desired result.
\begin{cor}[Conj. 1.1 of \cite{WarnaarZudilin}]
	\label{cor:characters}
	For $n, k \geq 0$,
	\begin{equation}
	\label{eq:characters}
		\frac{1}{(q)_\infty^{n(2n-1)}}\sum_{u \in \Z^n}(-1)^{|u|} \xi(u) q^{(k+n+\scriptstyle{\frac{1}{2}})|\!|u|\!|^2+ \tilde\rho \cdot u} = \sum_{m \in \mathbb{N}^{k(2n-1)}} \frac{q^{\scriptstyle{\frac{1}{2}}\sum\limits_{i,j=1}^{k}\sum\limits_{a,b=1}^{2n-1} T_{ij} A_{ab} m_{ia} m_{jb}}}{\prod\limits_{i=1}^k \prod\limits_{a=1}^{2n-1} (q)_{m_{ia}}}
	\end{equation}
	where $|u|$ denotes the sum of the components of $u$, $|\!|u|\!|^2 = u \cdot u$ the squared norm of $u$, $\tilde\rho$ the vector with components $\tilde\rho_m = m - \frac{1}{2}$, and finally
	\[
		\xi(u) = \prod_{1 \leq l < m \leq n} \frac{v_m^2 - v_l^2}{\tilde\rho_m^2 - \tilde\rho_l^2} \qquad v_m = \tilde\rho_m + (2(n+k)+1)u_m\,.
	\]
\end{cor}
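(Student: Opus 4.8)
The plan is to read the two sides of Eq.~\eqref{eq:characters} as the pair of $q$-series that Theorem~\ref{thm:characters} has already identified, so that the corollary reduces to two independent character computations — neither involving any deformation — followed by the equality the theorem provides. Concretely, I would (i) show that the left-hand side is the supercharacter of $L_k(\fosp_{1|2n})$, (ii) show that the right-hand side is the character of the principal subspace of $L_k(\fsl_{2n})$, and (iii) equate them via Theorem~\ref{thm:characters}.

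For the left-hand side, the prefactor $1/(q)_\infty^{n(2n-1)}$ is recognized as $(q)_\infty^{-\mathrm{sdim}\,\fosp_{1|2n}}$: the even part $\fsp_{2n}$ has dimension $n(2n+1)$ and the odd part has dimension $2n$, so $\mathrm{sdim}\,\fosp_{1|2n} = n(2n+1) - 2n = n(2n-1)$, which is exactly the free-field contribution to the vacuum supercharacter. The numerator — a sum over $u \in \Z^n$ weighted by $(-1)^{|u|}$, the theta-type exponent $(k+n+\tfrac{1}{2})|\!|u|\!|^2 + \tilde\rho\cdot u$, and the rational factor $\xi(u)$ — I would obtain from the Weyl--Kac-type supercharacter formula for the simple affine vertex superalgebra at level $k$, with all non-$q$ fugacities specialized to the values dictated by $\tilde\rho$. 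In this specialization $\xi(u)$ emerges as a ratio of (affine) Weyl denominators evaluated at the shifted weights $v_m = \tilde\rho_m + (2(n+k)+1)u_m$, and the sum over $u$ is the sum over the lattice (translation) part of the affine Weyl group. This identification is the content of Appendix~\ref{app:osp}.

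For the right-hand side, I would identify it as the vacuum character of the principal subspace of $L_k(\fsl_{2n})$. This is precisely the fermionic (Nahm-type) formula of Feigin--Stoyanovsky \cite{StoyanovskyFeigin}, proved in general by Georgiev \cite{Georgiev}: in the quadratic form $\tfrac{1}{2}\sum_{i,j}\sum_{a,b} T_{ij}A_{ab}m_{ia}m_{jb}$ the matrix $A$ is the Cartan matrix of $A_{2n-1}$ (the $2n-1$ simple roots of $\fsl_{2n}$) and $T_{ij} = \min(i,j)$ is the inverse Cartan matrix of the rank-$k$ tadpole graph, the latter encoding the level-$k$ truncation — equivalently the $(k+1)$-st power relations among the $e_i(z)$ used in the proof of Theorem~\ref{thm:characters}. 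Matching the summation range $m \in \N^{k(2n-1)}$ and the $(q)_{m_{ia}}$ denominators against the cited formula is then a direct comparison.

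With both identifications in hand, Theorem~\ref{thm:characters} equates the supercharacter of $L_k(\fosp_{1|2n})$ with the character of the principal subspace of $L_k(\fsl_{2n})$, which is Eq.~\eqref{eq:characters} for all $n, k \geq 1$. The boundary value $k = 0$ collapses the right-hand side to $1$ and recovers the $A^{(2)}_{2n}$ Macdonald identity \cite{Macdonald71}, so it is already known, while the $n = 0$ case is trivial. The genuinely computational step — the one I expect to be the main obstacle — is the left-hand side: pinning down the normalization of the specialized supercharacter and verifying that the resulting ratio of Weyl denominators is \emph{exactly} the factor $\xi(u)$, rather than merely proportional to it, requires care and is carried out in Appendix~\ref{app:osp}.
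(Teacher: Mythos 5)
Your proposal follows essentially the same route as the paper: the paper likewise recognizes the left-hand side of Eq.~\eqref{eq:characters} as the specialized Kac--Wakimoto supercharacter of $L_k(\fosp_{1|2n})$ (with the computation deferred to Appendix~\ref{app:osp}, exactly as you defer it), identifies the right-hand side with the principal-subspace character of $L_k(\fsl_{2n})$ via Feigin--Stoyanovsky and Georgiev, and concludes by Theorem~\ref{thm:characters}. Your treatment of the degenerate cases $k=0$ (Macdonald identity) and $n=0$ is a small addition the paper leaves implicit, but it does not change the substance of the argument.
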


\appendix 

\section{Some background on \texorpdfstring{$\fosp_{1|2n}$}{osp1|2n}}
\label{app:osp}
We follow the appendix A of \cite{CGL} and use the following notation:
\begin{itemize}
	\item $\mathfrak{g}=\mathfrak{osp}_{1|2n}$,
	\item $(\cdot|\cdot)$  a non-degenerate consistent supersymmetric invariant bilinear form,
	\item $\mathfrak{h}$ a Cartan subalgebra of  $\mathfrak{osp}_{1|2n}$, 
	\item $\Delta$  the root system of $\mathfrak{osp}_{1|2n}$ with respect to $\mathfrak{h}^*$, \item $\Pi=\{\alpha_1, \ldots, \alpha_n\}$  a set of simple roots of $\Delta$,
	\item $\Delta^+_0$ and $\Delta^+_1$ the sets of positive even and odd roots.
\end{itemize}

Then the highest root of $\mathfrak{osp}_{1|2n}$ is equal to $\theta = 2\alpha_1 + \cdots + 2\alpha_n$.
We take $\alpha_n$ to be the (unique) non-isotropic odd simple root.
The bilinear form $(\cdot|\cdot)$ on $\mathfrak{osp}_{1|2n}$ is normalized as $(\theta|\theta) = 2$, and the $\alpha_i$ satisfy 
\begin{align*}
	&(\alpha_i|\alpha_i) = 1,\quad
	(\alpha_i|\alpha_{i+1}) = -\frac{1}{2},\quad
	i=1, \ldots, n-1,\\
	&(\alpha_n|\alpha_n) = \frac{1}{2},\quad
	(\alpha_i|\alpha_j) = 0,\quad
	|i-j|>1.
\end{align*}
The Weyl group is defined to be the Weyl group of the even subalgebra $\fsp_{2n}$.
We identify $\mathfrak h^*$ with $\mathfrak h$ as usual, that is via $\nu: \mathfrak h^* \rightarrow \mathfrak h$ given by $(\nu(\alpha)|h) = \alpha(h)$

\subsection{The Weyl (super)dimension formula}
The character $\chi_\lambda$ of a finite-dimensional irreducible module of $\fosp_{1|2n}$ of highest-weight $\lambda$ is given by 
\cite[Theorem 2.35]{CW-book} 
\[
	\chi_\lambda = \frac{\prod\limits_{\alpha\in \Delta^+_1} (1 + e^{-\alpha})}{\prod\limits_{\alpha\in \Delta^+_0} (1 - e^{-\alpha})} \sum_{w \in W} (-1)^{\ell(w)} e^{w(\lambda+\rho) -\rho}
\]
while the supercharacter is obtained by changing the sign for every $e^\mu$ with $\mu \notin \lambda + Q_0$ for $Q_0$ the root lattice of $\fsp_{2n}$.
A Weyl reflection satisfies $\omega(\mu) \in \alpha_n + Q_0$ for any $\mu \in \alpha_n + Q_0$.
The reflection satisfies $\omega(\rho) - \rho \in Q_0$ if $\omega$ is a Weyl reflection corresponding to a short even root; otherwise $\omega(\rho) - \rho$ is in $Q_0 + \alpha_n$.

Let $(-1)^{\widetilde \ell(w)}$ be equal to one if $w$ is a product of an even number of Weyl reflections corresponding to short roots times some number of reflections corresponding to long roots.
Otherwise, we set $(-1)^{\widetilde \ell(w)}$ to minus one.
Hence, the supercharacter $\wt{\chi}_\lambda$ for $\lambda \in Q$ and even highest-weight vector is given by
\[
	\widetilde \chi_\lambda = \frac{\prod\limits_{\alpha\in \Delta^+_1} (1 - e^{-\alpha})}{\prod\limits_{\alpha\in \Delta^+_0} (1 - e^{-\alpha})} \sum_{w \in W} (-1)^{\widetilde \ell(w)} e^{w(\lambda+\rho) -\rho}.
\]
\begin{cor}
	Let $\rho_\lambda$ be an irreducible finite-dimensional highest-weight module of $\fosp_{1|2n}$ with highest weight $\lambda$ in the root lattice $Q$ of $\fosp_{1|2n}$ and such that the highest-weight vector is even.
	Then
	\[
		\text{dim}(\rho_\lambda) = \prod_{\alpha \in \Delta_0^+} \frac{(\alpha| \lambda+\rho)}{(\alpha|\rho)}, \qquad 
		\text{sdim}(\rho_\lambda) = \prod_{\alpha \in \Delta_{0, \text{short}}^+} \frac{(\alpha| \lambda+\rho)}{(\alpha|\rho)}
	\]
	where $\Delta_{0, \text{short}}^+$ is the set of short even positive roots. 
\end{cor}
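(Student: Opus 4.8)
The plan is to compute both quantities by the principal specialization that underlies the Weyl dimension formula, adapted to the superalgebra setting. I introduce the ring homomorphism $F_t$ on the group algebra of $\mathfrak h^*$ determined by $F_t(e^\mu) = e^{t(\rho|\mu)}$, so that $\text{dim}(\rho_\lambda) = \lim_{t\to 0} F_t(\chi_\lambda)$ and $\text{sdim}(\rho_\lambda) = \lim_{t\to 0} F_t(\widetilde\chi_\lambda)$; the $t\to 0$ limit is the precise form of the naive (ill-defined) evaluation $e^\mu \mapsto 1$. Throughout I work in standard orthogonal coordinates $\epsilon_1,\dots,\epsilon_n$ on $\mathfrak h^*$, in which the odd positive roots are the $\epsilon_i$, the short even positive roots are the $\epsilon_i \pm \epsilon_j$, and the long even positive roots are the $2\epsilon_i$, and I write $\rho = \rho_{\bar 0} - \rho_{\bar 1}$ with $\rho_{\bar 0} = \tfrac12\sum_{\alpha\in\Delta_0^+}\alpha$ and $\rho_{\bar 1} = \tfrac12\sum_{\alpha\in\Delta_1^+}\alpha$.

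For the dimension, the first step is to extract a super Weyl denominator identity by specializing the character formula to the trivial module: since $\chi_0 = 1$ and $\rho = \rho_{\bar 0}-\rho_{\bar 1}$, one obtains $\sum_{w\in W}(-1)^{\ell(w)}e^{w\rho} = \prod_{\alpha\in\Delta_0^+}(e^{\alpha/2}-e^{-\alpha/2})\big/\prod_{\alpha\in\Delta_1^+}(e^{\alpha/2}+e^{-\alpha/2})$. Using invariance of $(\cdot|\cdot)$ to rewrite $F_t\big(\sum_w (-1)^{\ell(w)} e^{w(\lambda+\rho)}\big)$ as the same alternating sum paired against $\lambda+\rho$, this identity turns the Weyl-sum numerator into $\prod_{\Delta_0^+} 2\sinh(t(\alpha|\lambda+\rho)/2)\big/\prod_{\Delta_1^+} 2\cosh(t(\alpha|\lambda+\rho)/2)$. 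Combining with the prefactor, rewritten via $1\mp e^{-\alpha} = e^{-\alpha/2}(e^{\alpha/2}\mp e^{-\alpha/2})$, and cancelling the matching exponential prefactor against the $e^{-t(\rho|\rho)}$ from the $-\rho$ shift as well as the matching powers of two, $F_t(\chi_\lambda)$ collapses to $\prod_{\alpha\in\Delta_0^+}\frac{\sinh(t(\alpha|\lambda+\rho)/2)}{\sinh(t(\alpha|\rho)/2)}\prod_{\alpha\in\Delta_1^+}\frac{\cosh(t(\alpha|\lambda+\rho)/2)}{\cosh(t(\alpha|\rho)/2)}$. Letting $t\to 0$ sends each $\sinh$-ratio to $(\alpha|\lambda+\rho)/(\alpha|\rho)$ and each $\cosh$-ratio to $1$, giving the claimed product over $\Delta_0^+$.

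For the superdimension I run the same machinery on the supercharacter formula, using two additional inputs. First, $(-1)^{\widetilde\ell(w)}$ is a genuine linear character of $W = W(C_n)$ --- the one taking value $-1$ on short-root reflections and $+1$ on long-root reflections --- so evaluating $\widetilde\chi_0 = 1$ yields the twisted identity $\sum_w (-1)^{\widetilde\ell(w)} e^{w\rho} = \prod_{\Delta_0^+}(e^{\alpha/2}-e^{-\alpha/2})\big/\prod_{\Delta_1^+}(e^{\alpha/2}-e^{-\alpha/2})$, now with a $\sinh$-type rather than $\cosh$-type odd factor. Second, and crucially, for $\fosp_{1|2n}$ each odd positive root $\epsilon_i$ is exactly half of the long even positive root $2\epsilon_i$; applying $e^{\epsilon_i}-e^{-\epsilon_i} = (e^{\epsilon_i/2}-e^{-\epsilon_i/2})(e^{\epsilon_i/2}+e^{-\epsilon_i/2})$ lets the odd factors merge with the long even factors in both the twisted denominator identity and the prefactor $\prod_{\Delta_1^+}(1-e^{-\alpha})\big/\prod_{\Delta_0^+}(1-e^{-\alpha})$. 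After the same exponential and power-of-two cancellations as before, all long even roots and all odd roots drop out and $F_t(\widetilde\chi_\lambda)$ collapses to $\prod_{\alpha\in\Delta_{0,\text{short}}^+}\frac{\sinh(t(\alpha|\lambda+\rho)/2)}{\sinh(t(\alpha|\rho)/2)}\prod_{i}\frac{\cosh(t(\epsilon_i|\lambda+\rho)/2)}{\cosh(t(\epsilon_i|\rho)/2)}$, whose $t\to 0$ limit is $\prod_{\alpha\in\Delta_{0,\text{short}}^+}(\alpha|\lambda+\rho)/(\alpha|\rho)$. Here the hypotheses $\lambda\in Q$ and even highest-weight vector are exactly what license the use of the displayed $\widetilde\chi_\lambda$.

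The hyperbolic bookkeeping is routine; the \emph{main obstacle} is the superdimension step, specifically confirming that $(-1)^{\widetilde\ell}$ descends to a well-defined linear character of $W$ and then tracking carefully how the odd-root contributions merge with the long even-root contributions so that the long even roots cancel \emph{exactly}, leaving only the short even roots. I would pay particular attention to the powers of two and to verifying that the leftover exponential factors (from the prefactor and from the $-\rho$ shift) sum to $e^{t(\rho|\rho)}$ and cancel, as a naive count can appear to leave a spurious overall constant; it is precisely at this cancellation that the reduction from $\Delta_0^+$ to $\Delta_{0,\text{short}}^+$ takes place.
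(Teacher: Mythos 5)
Your proposal is correct and takes essentially the same route as the paper's proof: both specialize at $t\rho^\vee$, extract the (twisted) denominator identities from $\chi_0 = \widetilde\chi_0 = 1$ with the odd roots merging into the long even roots via $1-e^{-2\epsilon_i} = (1-e^{-\epsilon_i})(1+e^{-\epsilon_i})$, use $W$-invariance of the bilinear form to trade the evaluation point $\rho$ for $\lambda+\rho$ in the Weyl sum, and then take $t\to 0$. Your $\sinh$/$\cosh$ bookkeeping is exactly the paper's computation of $A_\rho$, $\widetilde A_\rho$ and the identity $A_{\lambda+\rho}(t\rho^\vee) = e^{t(\lambda|\rho)}A_\rho(t(\lambda+\rho)^\vee)$, just written with the $e^{\rho}$ shift absorbed into the products.
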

\begin{proof}
	Let $A_\mu = \sum_{w \in W} (-1)^{\ell(w)} e^{w(\mu) -\rho} $ and $\widetilde A_\mu = \sum_{w \in W} (-1)^{\widetilde \ell(w)} e^{w(\mu) -\rho} $. Since $\chi_0 = 1 = \widetilde \chi_0$ it follows that
	\[
		A_\rho =  \frac{\prod\limits_{\alpha\in \Delta^+_0} (1 - e^{-\alpha})}{\prod\limits_{\alpha\in \Delta^+_1} (1 + e^{-\alpha})} = \prod\limits_{\alpha\in \Delta^+_0} (1 - e^{-\frac{\alpha}{(\alpha|\alpha)}}), \qquad 
		\widetilde A_\rho =  \frac{\prod\limits_{\alpha\in \Delta^+_0} (1 - e^{-\alpha})}{\prod\limits_{\alpha\in \Delta^+_1} (1 - e^{-\alpha})}= \prod\limits_{\alpha\in \Delta^+_0} (1 + (-1)^{(\alpha|\alpha)} e^{-\frac{\alpha}{(\alpha|\alpha)}}).
	\]
	Thus 
	\begin{equation}
		\begin{split}
			A_{\lambda+ \rho}(t\rho^\vee) &= \sum_{w \in W} (-1)^{\ell(w)} e^{(w(\lambda+ \rho) -\rho| t \rho)} 
			= \sum_{w \in W} (-1)^{\ell(w)} e^{t(w(\lambda+ \rho) -\rho|  \rho)} \\
			&= \sum_{w \in W} (-1)^{\ell(w)} e^{t(\lambda+ \rho|  w(\rho)) - t(\rho + \lambda - \lambda|\rho)} 
			= e^{t(\lambda|\rho)} A_{\rho}(t(\lambda+\rho)^\vee)
		\end{split}
	\end{equation}
	and analogously 
	$\widetilde A_{\lambda+ \rho}(t\rho^\vee) =  e^{t(\lambda|\rho)} \widetilde A_{\rho}(t(\lambda+\rho)^\vee)$ hence
	\begin{equation}\label{eq:dims}
		\begin{split}
			\dim(\rho_\lambda) &= \lim_{t \rightarrow 0} \frac{A_{\lambda+\rho}(t\rho^\vee)}{A_\rho(t\rho^\vee)} = 
			\lim_{t \rightarrow 0} \frac{A_{\rho}(t(\lambda+\rho)^\vee)}{A_\rho(t\rho^\vee)} = \prod_{\alpha \in \Delta_0^+} \frac{(\alpha| \lambda+\rho)}{(\alpha|\rho)} \\
			\text{sdim}(\rho_\lambda) &= \lim_{t \rightarrow 0} \frac{\widetilde A_{\lambda+\rho}(t\rho^\vee)}{\widetilde A_\rho(t\rho^\vee)} = 
			\lim_{t \rightarrow 0} \frac{\widetilde A_{\rho}(t(\lambda+\rho)^\vee)}{\widetilde A_\rho(t\rho^\vee)} = \prod_{\alpha \in \Delta_{0, \text{short}}^+} \frac{(\alpha| \lambda+\rho)}{(\alpha|\rho)}
		\end{split}
	\end{equation}
\end{proof}
We note that \eqref{eq:dims} holds for any $\lambda \in Q \otimes_\Z\C$, 
\begin{equation}\label{eq:dims2}
	\begin{split}
		\lim_{t \rightarrow 0} \frac{A_{\lambda+\rho}(t\rho^\vee)}{A_\rho(t\rho^\vee)} &= \lim_{t \rightarrow 0} \frac{A_{\rho}(t(\lambda+\rho)^\vee)}{A_\rho(t\rho^\vee)} = \prod_{\alpha \in \Delta_0^+} \frac{(\alpha| \lambda+\rho)}{(\alpha|\rho)} \\
		\lim_{t \rightarrow 0} \frac{\widetilde A_{\lambda+\rho}(t\rho^\vee)}{\widetilde A_\rho(t\rho^\vee)} &= \lim_{t \rightarrow 0} \frac{\widetilde A_{\rho}(t(\lambda+\rho)^\vee)}{\widetilde A_\rho(t\rho^\vee)} = \prod_{\alpha \in \Delta_{0, \text{short}}^+} \frac{(\alpha| \lambda+\rho)}{(\alpha|\rho)}
	\end{split}
\end{equation}

\subsection{The supercharacter of $L_k(\fosp_{1|2n})$ for $k \in \Z_{>0}$}

Let $\widehat{\mathfrak{g}}=\mathfrak{osp}_{1|2n}[t, t^{-1}] \oplus \C K \oplus \C D$ be the (untwisted) affine Lie superalgebra of $\mathfrak{osp}_{1|2n}$. The  Lie superbrackets are
\begin{align*}
	&[a\otimes t^{m_1}, b\otimes t^{m_2}] = [a,b]\otimes t^{m_1+m_2} + m_1(a|b)\delta_{m_1+m_2,0}K,\\
	&[D, a\otimes t^{m_1}]=m_1 a\otimes t^{m_1},\quad
	[K, \widetilde{\mathfrak{g}}]=0, \qquad a, b \in \mathfrak{osp}_{1|2n}, \ \ m_1, m_2 \in \Z.
\end{align*}
Let $\widehat{\mathfrak{h}} = \mathfrak{h} \oplus \C K\oplus\C D$ be a Cartan subalgebra of $\widehat{\mathfrak{g}}$.
The bilinear form on $\mathfrak{h}$ extends to $\widehat{\mathfrak{h}}$ via $(K|D)=1$ and $(K|K) = (D|D) = (h|K) = (h|D) =0$ for all $h\in \mathfrak{h}$. 
Let $\widehat{\Delta}$ be the root system of $\widehat{\mathfrak{g}}$ with respect to $\widehat{\mathfrak{h}}^*$ and $\widehat{\Pi} = \{\alpha_0\}\sqcup\Pi$ be a set of simple roots of $\widehat{\Delta}$.
The imaginary root is $\delta:=\alpha_0+\theta$ in $\widehat{\Delta}$ and $\Lambda_0 \in \widetilde{\mathfrak{h}}^*$ such that $\delta(D)=\Lambda_0(K)=1$ and $\delta(h)=\Lambda_0(h)=\delta(K)=\Lambda_0(D)=0$ for all $h \in \mathfrak{h}$.
We have $\widehat{\mathfrak{h}}^* = \mathfrak{h}^* \oplus \C\delta \oplus \C \Lambda_0$, $\widehat{\nu}(\delta)=K$ and $\widehat{\nu}(\Lambda_0)=D$.
Denote by $\alpha^\vee = 2\alpha/(\alpha|\alpha) \in \widehat{\mathfrak{h}}^*$ for $\alpha \in \widehat{\mathfrak{h}}^*$ if $(\alpha|\alpha) \neq 0$ (here we identify $\widehat{\mathfrak{h}}^*$ with $\widehat{\mathfrak{h}}$ as done in \cite{KWchar}).
Let $\rho$ be the Weyl vector of $\fosp_{1|2n}$, that is $(\rho|\alpha_i^\nu)=1$ for $i=1, \dots, n$ and set $\widehat\rho = \rho + (n+\frac{1}{2})\Lambda_0$, where $n+\frac{1}{2}$ is the dual Coxeter number of $\fosp_{1|2n}$. Let $\lambda = k\Lambda_0$ for $k \in \Z_{>0}$, then 
\[
	(\lambda + \widehat\rho)  ((\alpha + m\delta)^\vee) = (\lambda + \widehat \rho)( \alpha^\vee + \frac{2mK}{\alpha^2}) = \frac{2m}{\alpha^2]} (k+ n +\frac{1}{2}) + \rho(\alpha^\vee).
\]
Thus 
\[
	\Delta^\lambda_{1} = \{ \alpha \in \widehat \Delta_1 | (\lambda + \widehat\rho)(\alpha^\vee) \in \Z_{\text{odd}}\} = \{ \alpha^\vee + m\delta | \alpha \in \Delta_1,  m \in 2\Z\} 
\]
and set 
\[
	\Delta^\lambda_{0} = \{ \alpha \in \widehat \Delta_0, \frac{\alpha}{2} \notin \Delta_1 | (\lambda + \widehat\rho)(\alpha^\vee) \in \Z\} \cup \frac{1}{2}\Delta^\lambda_0, \qquad \Delta^\lambda = \Delta^\lambda_{0} \cup \Delta^\lambda_{1}
\]
Then the Weyl group $W^\lambda$ is generated by all the reflections $r_{\alpha^\vee}$ for $\alpha$ in $\Delta^\lambda$.
We observe that this group is the semi-direct product of the finite Weyl group $W$ with the translations 
\begin{align*}
	t_\alpha(\lambda):=\lambda+\lambda(K)\alpha-((\lambda|\alpha)+\frac{1}{2}(\alpha|\alpha)\lambda(K))\delta,\qquad \text{for} \ \alpha \in Q^\vee.
\end{align*}
The odd positive roots are $\epsilon_n:= \alpha_n, \epsilon_{n-1} := \alpha_{n-1} +\epsilon_n, \dots, \epsilon_1 := \alpha_1 + \epsilon_2$.
They satisfy $(\epsilon_i| \epsilon_j) = \frac{1}{2}\delta_i \delta_j$, the set of short even roots is $\{ \pm \epsilon_i \pm \epsilon_j| i \neq j \}$ and the long ones are $2\epsilon_i$ for $i=1, \dots, n$; and so  $Q\cong\frac{1}{\sqrt{2}} \Z^n$.
Similarly $Q^\vee \cong \sqrt{2}\Z^n$ is generated by $2\epsilon_1, \dots, 2\epsilon_n$.
The Weyl vector is 
\[
	\rho = \frac{1}{2}\left(\epsilon_n + 3 \epsilon_{n-1} + \dots + (2n-1)\epsilon_1 \right)
\]
By  \cite[Theorem $1^s$]{KWchar} the character and supercharacter of $L_k(\fosp_{1|2n})$ satisfies
\begin{equation}
	\begin{split}
		\text{ch}[L_k(\fosp_{1|2n})] &= \sum_{w \in W^\lambda}(-1)^{\ell(w)}\text{ch}[M(w.\lambda)]     \\
		\text{sch}[L_k(\fosp_{1|2n})] &= \sum_{w \in W^\lambda}(-1)^{\widetilde\ell(w)}\text{sch}[M(w.\lambda)] 
	\end{split}
\end{equation}
with $M(\mu)$ the Verma module of highest-weight $\mu$ and $w.\lambda = w(\lambda+\rho)-\rho$
The character and supercharacter of $M(\mu)$ is 
\[
	\text{ch}[M(\mu)] =  \frac{q^{h_\mu - \frac{c_k}{24}} D^+}{A_\rho }, \qquad\
	\text{sch}[M(\mu)] =  \frac{(-1)^{|v_\mu|} q^{h_\mu - \frac{c_k}{24}} D^-}{\widetilde A_\rho }, \qquad D^\pm := \frac{\prod\limits_{\substack{n=1 \\ \alpha \in \Delta_1}}^\infty (1 \pm e^{-\alpha}q^n)}{ \prod\limits_{\substack{n=1 \\ \alpha \in \Delta_0}}^\infty (1 - e^{-\alpha}q^n)}
\]
with $v_\mu$ the parity of the highest-weight vector and
\[
	h_\mu = \frac{(\mu|\mu+2\rho)}{2\kappa}, \qquad c_k = \frac{kn(2n-1)}{2\kappa}, \qquad \kappa = k+n+\frac{1}{2}. 
\]
Thus 
\begin{equation}
	\begin{split}
		\text{ch}[L_k(\fosp_{1|2n})] &= \frac{D^+}{A_\rho} q^{-\frac{c_k}{24}}\sum_{w \in W, \lambda \in \kappa Q^\vee}(-1)^{\ell(w)} e^{w(\lambda+\rho) -\rho}q^{h_\lambda} \\     
		&= D^+ q^{-\frac{c_k}{24}}\sum_{\lambda \in \kappa Q^\vee} \frac{A_{\lambda+\rho}}{A_\rho} q^{h_\lambda} \\ 
		\text{sch}[L_k(\fosp_{1|2n})] &=  \frac{D^-}{\widetilde A_\rho} q^{-\frac{c_k}{24}}\sum_{w \in W, \lambda \in \kappa Q^\vee}(-1)^{\widetilde\ell(w) + 2\lambda^2} e^{w(\lambda+\rho) -\rho}q^{h_\lambda}     \\ 
		&= D^- q^{-\frac{c_k}{24}}\sum_{\lambda \in \kappa Q^\vee} (-1)^{2\lambda^2}\frac{\widetilde A_{\lambda+\rho}}{\widetilde A_\rho} q^{h_\lambda} 
	\end{split}
\end{equation}
The claimed character formula follows using \eqref{eq:dims2}, that is the left-hand side of \eqref{eq:characters} is the specialization of $\text{sch}[L_k(\fosp_{1|2n})]$, 
\begin{equation}
	\lim_{t\rightarrow 0}  \text{sch}[L_k(\fosp_{1|2n})](t\rho^\vee) =
	\frac{1}{(q)_\infty^{n(2n-1)}}\sum_{u \in \Z^n}(-1)^{|u|} \xi(u) q^{(k+n+\scriptstyle{\frac{1}{2}})|\!|u|\!|^2+ \tilde\rho \cdot u} 
\end{equation}    
where $|u|$ denotes the sum of the components of $u$, $|\!|u|\!|^2 = u \cdot u$ the squared norm of $u$, $\tilde\rho$ the vector with components $\tilde\rho_m = m - \frac{1}{2}$, and finally
\[
	\xi(u) = \prod_{1 \leq l < m \leq n} \frac{v_m^2 - v_l^2}{\tilde\rho_m^2 - \tilde\rho_l^2} \qquad v_m = \tilde\rho_m + (2(n+k)+1)u_m\,.
\]

\section{Presentation for the principal subspace of \texorpdfstring{$L_k(\fsl_{N+1})$}{LkslN+1}}
\label{app:presentation}
In this appendix we provide a presentation of the principal subspace of $L_k(\fsl_{N+1})$, following the work of Georgiev \cite{Georgiev} and of Butorac-Ko{\v z}i{\' c} \cite{ButoracKozic}.

We start with some notation, which largely overlaps with that of the main text.
Let $\fg = \fsl_{N+1}$ and denote by $\widehat{\fg}$ its affinization.
For $X \in \fg$ denote by $X_m = X \otimes t^m \in \widehat{\fg}$.
Choose a triangular decomposition $\fg = \fn_+ \oplus \fh \oplus \fn_-$; we set $\bar{\fn}_\pm = \fn_\pm \otimes \C[t]$.
Let $\Delta$ denote the set of roots with respect to $\fh$, $\Delta_\pm$ the set of positive/negative roots, and $\Pi = \{\alpha_1, \dots, \alpha_N\}$ the set of positive simple roots.
We will make use of the Chevalley basis $\{x_\alpha\}_{\alpha \in \Delta} \cup \{h_{\alpha_i}\}_{i=1}^N$.

We denote by $v_k$ the highest weight vector of the simple vacuum module $L_k(\fsl_{N+1})$ at level $k$.
As defined by Feigin-Stoyanovsky \cite{StoyanovskyFeigin}, the principal subspace of $L_k(\fsl_{N+1})$ is
\[
	W_{L_k(\fsl_{N+1})} := U(\bar{\fn}_+)v_k
\]
where $U(\ff)$ denotes the universal enveloping algebra of a Lie algebra $\ff$.
The principal subspace is clearly a quotient of $U(\bar{\fn}_+)$; the aim of this appendix is to show that the left ideal $\mathcal{I}_k$ annihilating $v_k$ is given by
\[
	\mathcal{I}_k = U(\bar{\fn}_+) \bar{\fn}^{\geq 0}_+ + \sum_{i=1}^N \sum_{m \geq k+1} U(\bar{\fn}_+) R_i(-m)\,,
\]
where $\bar{\fn}^{\geq 0}_+ = \fn \otimes \C[t]$ and 
\[
	R_i(-m) = \sum_{\substack{m_1, \dots, m_{k+1} \leq -1\\ m_1 + \dots + m_{k+1} = -m}} x_{\alpha_i, m_1} \dots x_{\alpha_i, m_{k+1}}\,.
\]
This gives us the desired presentation:
\begin{thm}
	\label{thm:presentation}
	For all positive integer integers we have
	\[
		W_{L_k(\fsl_{N+1})} \simeq U(\bar{\fn}_+) / \mathcal{I}_k\,.
	\]
\end{thm}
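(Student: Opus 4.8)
The plan is to follow the quasi-particle method of Georgiev \cite{Georgiev} and Butorac--Ko\v{z}i\'{c} \cite{ButoracKozic}, adapting their type $D$, $E$, $F$ arguments to type $A$. There is a tautological surjection $\phi\colon U(\bar{\fn}_+)/\mathcal{I}_k \to W_{L_k(\fsl_{N+1})}$, so the content of the theorem is that $\mathcal{I}_k$ exhausts the left ideal in $U(\bar{\fn}_+)$ annihilating $v_k$. The strategy is to bound the graded dimension of the quotient from above by a spanning argument and to match it against Georgiev's known character of the principal subspace, which then forces $\phi$ to be an isomorphism.

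First I would verify the easy containment $\mathcal{I}_k \subseteq \operatorname{Ann}(v_k)$. The generators in $U(\bar{\fn}_+)\bar{\fn}^{\geq 0}_+$ kill $v_k$ because $v_k$ is the vacuum and is annihilated by all nonnegative modes of $\fn_+$. For the generators $R_i(-m)$ with $m \geq k+1$ I would invoke the integrability relation $x_{\alpha_i}(z)^{k+1} = 0$ in $L_k(\fsl_{N+1})$, valid because each simple root $\alpha_i$ spans an $\fsl_2$-triple whose level-$k$ highest-weight module satisfies $(x_{\alpha_i,-1})^{k+1} v_k = 0$; extracting the relevant coefficient of this vanishing field and discarding the nonnegative-mode contributions, which already lie in $\mathcal{I}_k$, gives $R_i(-m)v_k = 0$. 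This produces $\phi$ and hence the inequality $\operatorname{ch} U(\bar{\fn}_+)/\mathcal{I}_k \geq \operatorname{ch} W_{L_k(\fsl_{N+1})}$ coefficientwise.

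The main work, and the hard part, is the reverse bound via a straightening argument. I would first use the commutation relations in $\bar{\fn}_+$ to rewrite an arbitrary monomial in the $x_{\alpha, m}$ as a linear combination of ordered quasi-particle monomials built from the simple-root currents $x_{\alpha_i, m}$, the non-simple positive-root modes being generated by iterated brackets of simple ones, and then order these quasi-particles by color and energy. The relations $R_i(-m) = 0$ for $m \geq k+1$ impose the crucial difference conditions, bounding the charge of each quasi-particle (equivalently, the number of equal-energy factors of a fixed color) by $k$, while the generators in $\bar{\fn}^{\geq 0}_+$ enforce the initial conditions. The outcome should be that $U(\bar{\fn}_+)/\mathcal{I}_k$ is spanned by precisely the admissible configurations indexing Georgiev's basis, yielding $\operatorname{ch} U(\bar{\fn}_+)/\mathcal{I}_k \leq \operatorname{ch} W_{L_k(\fsl_{N+1})}$. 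The delicate point is confirming that no further relations are needed, i.e.\ that the simple-root relations together with the Lie brackets genuinely straighten every monomial into this normal form; this is exactly where the Butorac--Ko\v{z}i\'{c} combinatorial analysis must be imported and checked to carry over unchanged for type $A$, with the $\fsl_3$ case of \cite{Sadowski2} serving as a consistency check.

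Combining the two bounds gives $\operatorname{ch} U(\bar{\fn}_+)/\mathcal{I}_k = \operatorname{ch} W_{L_k(\fsl_{N+1})}$, so $\phi$ is an isomorphism of graded vector spaces and hence of $U(\bar{\fn}_+)$-modules. I expect the only genuine obstacle to be the spanning step; the matching character on the right-hand side is already recorded by Georgiev \cite{Georgiev} (conjecturally by Feigin--Stoyanovsky \cite{StoyanovskyFeigin}), and the easy containment together with the final comparison is then formal.
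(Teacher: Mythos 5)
Your proposal follows essentially the same route as the paper: the formal surjection from $U(\bar{\fn}_+)/\mathcal{I}_k$ onto the principal subspace, combined with a straightening/spanning argument (imported from Georgiev and Butorac--Ko\v{z}i\'{c}) showing the quotient is spanned by the quasiparticle monomials indexing Georgiev's basis, which forces the surjection to be an isomorphism. The paper phrases the final step as the surjection carrying a spanning set bijectively onto a basis rather than as a comparison of graded characters, but this is the same argument, and you correctly isolate the spanning step as the only substantive work.
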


\subsection{Quasiparticle basis of $W_{L_k(\fsl_{N+1})}$}
We now introduce a basis of $W_{L_k(\fsl_{N+1})}$ due to \cite{Georgiev}.
We first consider the formal series
\[
	x_{\alpha_i}(z) = \sum_{m} x_{\alpha_i, m} z^{-m-1}\,, \qquad i = 1, \dots, N\,,
\]
where are fields on $L_k(\fsl_{N+1})$.
Note that $[x_{\alpha_i}(z_1), x_{\alpha_i}(z_2)] = 0$, so that
\[
	x_{n \alpha_i}(z) = x_{\alpha_i}(z)^n = \sum_{m} x_{n \alpha_i, m} z^{-m-n}
\]
is also a well-defined field on $L_k(\fsl_{N+1})$.
Note that the $R_i(-m)$ appearing in the definition of $\mathcal{I}_k$ are coefficients of $x_{(k+1) \alpha_i}(z)$.
We also note that by the Poincar\'{e}-Birkoff-Witt theorem for $U(\bar{\fn}_+)$ we have the following isomorphism of vector spaces:
\[
	U(\bar{\fn}_+) = U(\bar{\fn}_{\alpha_N}) \dots U(\bar{\fn}_{\alpha_1})
\]
where $\bar{\fn}_{\alpha_i} = \fn_{\alpha_i} \otimes \C[t^{\pm 1}]$ and $\fn_{\alpha_i} = \C x_{\alpha_i}$ is the 1-dimensional abelian Lie algebra generated by the root vector $x_{\alpha_i}$.

As in \cite{Georgiev}, we call the coefficient $x_{n \alpha_i, m}$ a quasiparticle of color $i$, charge $n$, and energy $-m$.
We also call an endomorphism of the form
\[
	b = \big(x_{n_{r_N,N}\alpha_N, m_{r_N, N}} \dots x_{n_{1,N}\alpha_N, m_{1, N}}\big) \dots \big(x_{n_{r_1,1}\alpha_1, m_{r_1, 1}} \dots x_{n_{1,1}\alpha_1, m_{1, 1}}\big)
\]
a quasiparticle monomial; we say that such a quasiparticle monomial has color-charge-type
\[
	(n_{r_N,N}, \dots, n_{1,N}; \dots; n_{r_1,1}, \dots, n_{1,1})
\]
and index sequence
\[
	(m_{r_N,N}, \dots, m_{1,N}; \dots; m_{r_1,1}, \dots, m_{1,1})\,.
\]
We call the tuples $(n_N; \dots; n_1)$ and $(m_N; \dots; m_1)$, where $n_i = n_{r_i, i} + \dots + n_{1,i}$ and $m_i = m_{r_i, i} + \dots + m_{1,i}$, its color type and its index sum.
As quasiparticles of color $i$ commute amongst themselves, we assume that $n_{a+1, i} \leq n_{a,i}$ and, if $n_{a+1, i} = n_{a,i}$, that $m_{a+1, i} \leq m_{a,i}$ without loss of generality.

We will also need two orders on the set of all quasiparticle monomials: the lexicographical (linear) order $<$ and the multidimensional (partial) order $\prec$.
For two such monomials $b, b'$ we say $b < b'$ if the color-charge-type of $b$ is less than that of $b'$ in lexicographical order; if their color-charge-types are the same, we say $b < b'$ if its index sequence is less than that of $b'$ in lexicographical order.
Additionally, we say that $b \prec b'$ if $b < b'$ and, for every $1 \leq s \leq N$, one has $m_s + \dots + m_1 \leq m_s' + \dots + m_1'$ and this is a strict inequality for at least one $s$, where $m_i = \sum_a m_{a,i}$.

Denote by $B$ the set of quasiparticle monomials satisfying
\begin{align}
	m_{a+1, i} & \leq m_{a, i} - 2 n_{a,i} \quad \text{if } n_{a+1, i} = n_{a,i} \label{eq:cond1}\\
	m_{a, i} & \leq (1-2a) n_{a,i} + \sum_{b=1}^{r_{i-1}} \min(n_{b,i-1}, n_{a,i}) \label{eq:cond2}\\
	n_{a,i} & \leq k \label{eq:cond3}
\end{align}
for all $i = 1, \dots N$ and $a = 1, \dots r_i$.

\begin{thm}[\cite{Georgiev}, Theorem 5.2]
	The set
	\[
		\mathfrak{B} = \{b v_k | b \in B\}
	\]
	is a basis of the principal subspace $W_{L_k(\fsl_{N+1})}$.
\end{thm}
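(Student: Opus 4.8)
The plan is to establish the two defining properties of a basis separately --- that $\mathfrak{B}$ spans $W_{L_k(\fsl_{N+1})}$ and that its elements are linearly independent --- following the strategy of Georgiev and organizing both arguments around the quasiparticle monomials and the two orders $<$ and $\prec$ introduced above.

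For spanning, I would begin from the Poincar\'{e}--Birkhoff--Witt isomorphism $U(\bar{\fn}_+) = U(\bar{\fn}_{\alpha_N})\cdots U(\bar{\fn}_{\alpha_1})$ together with the commutativity of quasiparticles of a single color, so that every vector $bv_k$ is a combination of quasiparticle monomials applied to the highest weight vector. The task is then to rewrite an arbitrary such monomial in terms of those in $B$. The charge bound \eqref{eq:cond3} is immediate: the vanishing of the modes $R_i(-m)$ generating $\mathcal{I}_k$, equivalently the relation $x_{\alpha_i}(z)^{k+1}=0$ on $L_k(\fsl_{N+1})$, forces every quasiparticle appearing with nonzero coefficient to carry charge at most $k$. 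The difference condition \eqref{eq:cond1} among same-color quasiparticles of equal charge follows from the relations satisfied by the coefficients of $x_{n\alpha_i}(z_1)\,x_{n\alpha_i}(z_2)$, which let one trade a monomial violating \eqref{eq:cond1} for admissible ones plus strictly $<$-smaller monomials. The upper energy bound \eqref{eq:cond2} comes from the interaction between adjacent colors $i$ and $i-1$ together with the truncation imposed by the action on the vacuum (positive modes annihilate $v_k$). An induction on the linear order $<$ then shows that the $B$-monomials span.

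The linear independence is the main obstacle, and is where the genuine vertex-algebraic input enters. I would prove that $\sum_{b \in B} c_b\, bv_k = 0$ implies all $c_b = 0$ by induction on color-charge-type with respect to $\prec$, using the calculus of intertwining operators among the principal subspaces of the standard $L_k(\fsl_{N+1})$-modules. Concretely, the principal subspace carries compatible gradings by conformal weight and by charge (the root lattice), and for a fixed leading color-charge-type one constructs a composition of charge-raising operators --- coefficients of intertwining operators among the standard modules --- that maps the relation into a simpler module and isolates the contribution of that type. Extracting the appropriate leading coefficient in the formal variables then reduces the vanishing of the $c_b$ to the analogous statement for strictly $\prec$-smaller types, closing the induction.

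The crux, and the step I expect to require the most care, is the construction of these detecting operators and the verification that the leading-coefficient extraction separates the admissible monomials cleanly: the combinatorial inequalities \eqref{eq:cond1}--\eqref{eq:cond3} defining $B$ are arranged precisely so that the extremal monomial of each type survives the extraction while the remaining ones are pushed to strictly lower order. As this result is due to Georgiev, the argument ultimately reduces to his Theorem 5.2, and the sketch above records how I would reconstruct it.
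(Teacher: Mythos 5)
The paper offers no proof of this statement: it is imported verbatim as Theorem 5.2 of \cite{Georgiev} and used as a black box in the proof of Theorem \ref{thm:presentation}, so there is no internal argument to compare yours against. Your sketch faithfully reconstructs Georgiev's own two-part strategy --- spanning from the quasiparticle relations underlying \eqref{eq:cond1}--\eqref{eq:cond3} (with the charge bound \eqref{eq:cond3} indeed immediate from $x_{\alpha_i}(z)^{k+1}=0$ on the integrable module), and linear independence from intertwining operators among the standard modules --- and the one step you leave schematic, the construction and leading-coefficient analysis of the detecting operators, is precisely the content of the cited theorem that both you and the paper ultimately defer to.
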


\subsection{Spanning set for $U(\bar{\fn}_+)/\mathcal{I}_k$}
We now give a spanning set for the quotient $U(\bar{\fn}_+)/\mathcal{I}_k$, from which we will conclude Theorem \ref{thm:presentation}.
For $X \in U(\bar{\fn}_+)$ we denote by $\overline{X}$ its image in this quotient; we then set
\[
	\overline{\mathfrak{B}} = \{\overline{b} | b \in B\}\,.
\]

\begin{prop}
	\label{prop:spanningset}
	The set $\overline{\mathfrak{B}}$ spans the quotient $U(\bar{\fn}_+)/\mathcal{I}_k$.
\end{prop}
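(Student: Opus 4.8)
The plan is to follow Georgiev's reduction argument \cite{Georgiev}, adapted to the explicit generators $R_i(-m)$ of $\mathcal{I}_k$, and to run it as a descending induction with respect to the two orders $<$ and $\prec$. Given any $X \in U(\bar{\fn}_+)$, I would first use the PBW factorization $U(\bar{\fn}_+) = U(\bar{\fn}_{\alpha_N}) \cdots U(\bar{\fn}_{\alpha_1})$ to rewrite $\overline{X}$ as a linear combination of color-ordered quasiparticle monomials, grouping the quasiparticles of each color $i$ so that charges are non-increasing and, at fixed charge, energies are non-increasing. Each resulting monomial is then of the standard form $b$, so the task reduces to expressing every such $\overline{b}$ as a combination of elements of $\overline{\mathfrak{B}}$. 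Since at fixed total color-charge-type and fixed index sum the set of admissible monomials is finite, the combined order is well-founded and it suffices to show that any $\overline{b}$ violating one of \eqref{eq:cond1}, \eqref{eq:cond2}, \eqref{eq:cond3} can be rewritten modulo $\mathcal{I}_k$ as a combination of strictly smaller monomials.

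There are two families of reductions. The color-internal conditions \eqref{eq:cond1} and \eqref{eq:cond3} are handled using the commutativity $[x_{\alpha_i}(z_1), x_{\alpha_i}(z_2)] = 0$ together with the ideal generators $R_i(-m)$: the latter are exactly the negative-mode coefficients of $x_{(k+1)\alpha_i}(z) = x_{\alpha_i}(z)^{k+1}$, so any quasiparticle of color $i$ and charge exceeding $k$ can be expressed through monomials that are strictly lower in the lexicographic order on color-charge-types, enforcing \eqref{eq:cond3}; the same-color generating-function relations then supply the energy-gap condition \eqref{eq:cond1}. The color-linking condition \eqref{eq:cond2} is handled using the summand $U(\bar{\fn}_+)\bar{\fn}_+^{\geq 0}$ of $\mathcal{I}_k$, which annihilates any monomial having a nonnegative-energy factor on its right, in combination with the Chevalley brackets $[x_{\alpha_{i-1}}, x_{\alpha_i}] = \pm x_{\alpha_{i-1}+\alpha_i}$ between adjacent colors. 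Commuting a nonnegative mode rightward produces only correction terms in which two colors have been linked, and all such terms are strictly lower in $\prec$; iterating this controls the maximal admissible energy of each quasiparticle in terms of its neighbors of color $i-1$. In every case the monomial being rewritten is the $\prec$-leading term of the relation used, so the induction terminates with a combination of monomials in $B$, which proves the claim.

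The hard part will be verifying the leading-term claim for the upper bound \eqref{eq:cond2}, since it is the one condition that couples all colors simultaneously: the maximal energy of a charge-$n$ quasiparticle of color $i$ is governed by the linking term $\sum_{b}\min(n_{b,i-1}, n_{a,i})$, and one must show that the relation obtained from $\bar{\fn}_+^{\geq 0}$ and the brackets $[x_{\alpha_{i-1}}, x_{\alpha_i}]$ has precisely the offending monomial as its $\prec$-leading term, with every correction term strictly smaller in the combined order. This is the delicate combinatorial bookkeeping at the heart of the argument, and it is exactly where I would follow the color-by-color analysis of \cite{Georgiev} and the analogous type $D$, $E$, $F$ treatment of \cite{ButoracKozic} most closely. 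Once this spanning statement is established, it combines with Georgiev's basis theorem for $W_{L_k(\fsl_{N+1})}$ to force $\overline{\mathfrak{B}}$ to be linearly independent, yielding the isomorphism of Theorem \ref{thm:presentation}.
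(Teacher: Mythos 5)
Your overall skeleton (reduce to color-ordered quasiparticle monomials, induct on the orders $<$ and $\prec$, then get linear independence of $\overline{\mathfrak{B}}$ for free from Georgiev's basis theorem) matches the paper's, but the way you wire the three conditions to the available tools contains a genuine error, concentrated exactly where the proposition is nontrivial: $\mathcal{I}_k$ is a \emph{left} ideal, so the generators $R_i(-m)$ and the summand $U(\bar{\fn}_+)\bar{\fn}_+^{\geq 0}$ only kill a monomial when they stand at its \emph{right end}. Your claim that \eqref{eq:cond3} is ``color-internal'' and follows from $[x_{\alpha_i}(z_1),x_{\alpha_i}(z_2)]=0$ together with the $R_i(-m)$ fails for every color $i\geq 2$: a quasiparticle of color $i$ and charge $>k$ sits to the left of all color-$(i-1),\dots,1$ factors, and factoring $x_{n\alpha_i}(z)=x_{(n-k-1)\alpha_i}(z)\,x_{(k+1)\alpha_i}(z)$ does not place any generator of $\mathcal{I}_k$ at the right end of the full monomial. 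This is precisely why the paper follows Butorac--Ko\v{z}i\'{c}: one works with the product $P_{\{n_{a,i}\}}(\{z_{a,i}\})X_{\{n_{a,i}\}}(\{z_{a,i}\})$, uses the commutation relations to carry the offending current all the way to the right \emph{of that product}, concludes the whole product lies in $\mathcal{I}_k$, and then extracts coefficients and iterates. The outcome is that a monomial violating \eqref{eq:cond3} is re-expressed through monomials of the \emph{same} color-charge-type and ultimately shown to lie in $\mathcal{I}_k$ --- not, as you assert, reduced to monomials strictly lower in the lexicographic order on color-charge-types.

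The mechanism you propose for \eqref{eq:cond2} has the same defect in a different guise. Commuting a nonnegative mode rightward with $[x_{\alpha_{i-1}},x_{\alpha_i}]=\pm x_{\alpha_{i-1}+\alpha_i}$ produces correction terms containing currents attached to the non-simple root $\alpha_{i-1}+\alpha_i$; these are not quasiparticle monomials, are not comparable in your orders, and cannot be fed back into your induction, so the argument as stated does not close. The paper (following Lemma 5.1 and Theorem 5.1 of \cite{Georgiev}) avoids composite-root corrections entirely by multiplying by the prefactor $P_{\{n_{a,i}\}}$ \emph{first}: since $(z_{a,i}-z_{b,i-1})$ annihilates the commutator of adjacent-color currents, the factors $\bigl(1-z_{b,i-1}/z_{a,i}\bigr)^{\min(n_{b,i-1},n_{a,i})}$ let the currents pass one another, and the resulting regularity statement \eqref{eq:prop3}, combined with the same-color fusion relations \eqref{eq:prop1} \emph{and} the derivative relations \eqref{eq:prop2} --- which your sketch never invokes, but which are needed to produce the $(1-2a)n_{a,i}$ part of the bound --- yields \eqref{eq:cond2} by the induction on color type and index sum. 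So while your deferrals to \cite{Georgiev} and \cite{ButoracKozic} point at the right sources, the concrete reductions you propose in place of \eqref{eq:prop2} and \eqref{eq:prop3} would fail, and fleshing out your sketch would in practice mean replacing them by the paper's three generating-function properties.
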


We will prove this proposition briefly, first using it to prove the main result of this appendix as in the proof of Theorem 4.1 of \cite{ButoracKozic}.

\begin{proof}[Proof of Theorem \ref{thm:presentation}]
	We start with the canonical surjection
	\[
		f: U(\bar{\fn}_+) \to W_{L_k(\fsl_{N+1})}\,, \quad X \mapsto X v_k\,.
	\]
	It is clear that $\mathcal{I}_k$ belongs to the kernel of $f$, so this map factors through $U(\bar{\fn}_+)/\mathcal{I}_k$.
	Denote by $\overline{f}$ the corresponding map from $U(\bar{\fn}_+)/\mathcal{I}_k$ to $W_{L_k(\fsl_{N+1})}$; this map is necessarily surjective and bijectively maps the spanning set $\overline{\mathfrak{B}}$ to the basis $\mathfrak{B}$.
	We conclude that $\overline{\mathfrak{B}}$ is a basis of $U(\bar{\fn}_+)/\mathcal{I}_k$ and that $\overline{f}$ is an isomorphism of vector spaces.
\end{proof}

We end this appendix by sketching a proof of Proposition \ref{prop:spanningset}.
The proof closely mirrors the proof of Theorem 5.1 of \cite{Georgiev} and parts of the proof of Theorem 4.1 of \cite{ButoracKozic}, so we only provide the main ideas.

\begin{proof}[Proof of Proposition \ref{prop:spanningset}]
	We note three properties satisfied by the formal series $x_{n \alpha_i}(z)$ independent of the module of $\widehat{\fg}$ they are acting on and hence can be applied to $U(\bar{\fn}_+)$.
	The first two properties ultimately stem from the expression $x_{n \alpha_i}(z) = x_{\alpha_i}(z)^n$.
	The first property takes the form
	\begin{equation}
	\label{eq:prop1}
	\begin{aligned}
		x_{n \alpha_i}(z) x_{n' \alpha_i}(z) & = x_{(n-1) \alpha_i}(z) x_{(n'+1) \alpha_i}(z)\\
		& = \dots = x_{\alpha_i}(z) x_{(n+n'-1)\alpha_i}(z) = x_{(n+n')\alpha_i}(z)
	\end{aligned}
	\end{equation}
	for $0 < n \leq n'$, cf. Eq. (3.18) of \cite{Georgiev}; this induces $n$ relations on quasiparticle monomials of color $i$.
	The second property we will need is given by
	\begin{equation}
	\label{eq:prop2}
	\begin{aligned}
		\frac{n + n'}{n}\bigg(\frac{d \hfill}{d z}x_{n \alpha_i}(z)\bigg) x_{n' \alpha_i}(z) & = \frac{n + n'}{n-1}\bigg(\frac{d \hfill}{d z}x_{(n-1) \alpha_i}(z)\bigg) x_{(n'+1) \alpha_i}(z)\\
		& = \dots = \frac{n + n'}{1}\bigg(\frac{d \hfill}{d z}x_{\alpha_i}(z)\bigg) x_{(n+n'-1) \alpha_i}(z)\\
		& = \frac{d \hfill}{d z}\bigg(x_{n \alpha_i}(z) x_{n' \alpha_i}(z)\bigg)
	\end{aligned}
	\end{equation}
	for $0 < n \leq n'$, cf. Eq. (3.22) of \cite{Georgiev}; this induces another $n$ relations on quasiparticle monomials of color $i$.
	
	The third and final property we will need is as follows.
	We consider the generating function of quasiparticle monomials of a fixed color-charge-type
	\[
		X_{\{n_{a,i}\}}(\{z_{a,i}\}) = x_{n_{r_N,N}}(z_{r_N,N}) \dots x_{n_{1,N}}(z_{1,N}) \dots x_{n_{r_1,1}}(z_{r_1,1}) \dots x_{n_{1,1}}(z_{1,1})
	\]
	and multiply it by the Laurent polynomial
	\[
		P_{\{n_{a,i}\}}(\{z_{a,i}\}) = \prod_{i=2}^N \prod_{a=1}^{r_i} \prod_{b=1}^{r_{i-1}} \bigg(1-\frac{z_{b,i-1}}{z_{a,i}}\bigg)^{\min(n_{b,i-1},n_{a,i})}\,.
	\]
	The third property then says that their product, modulo $\mathcal{I}_k$, belongs to $(U(\bar{\fn}_+)/\mathcal{I}_k)[[\{z_{a,i}\}]]$:
	\begin{equation}
	\label{eq:prop3}
		P_{\{n_{a,i}\}}(\{z_{a,i}\}) X_{\{n_{a,i}\}}(\{z_{a,i}\}) + \mathcal{I}_k \in \big(U(\bar{\fn}_+)/\mathcal{I}_k\big)[[\{z_{a,i}\}]]
	\end{equation}
	cf. Lemma 5.1 of \cite{Georgiev}.
	
	As in the proof of Theorem 5.1 of \cite{Georgiev}, the properties in Eq. \eqref{eq:prop1} imply that if the color-charge-type of the quasiparticle monomial $b$ has $n_{a,i} = n_{a+1,i}$ for some $i, a$ but its index sequence does not satisfy
	\[
		m_{a+1,i} \leq m_{a, i} - 2 n_{a,i}
	\]
	then it can be re-expressed as a linear combination of quasiparticle monomials $b'$ with $b \prec b'$ of same color type and index sum, but possibly different color-charge-type; only a finite number of the $b'$ do not belong to $U(\bar{\fn}_+)\bar{\fn}_+^{\geq 0} \subset \mathcal{I}_k$.
	Thus, the first condition Eq. \eqref{eq:cond1} defining the set $B$ of quasiparticle monomials follows from the property in Eq. \eqref{eq:prop1}.
	
	If the quasiparticle monomial $b$ does not satisfy the second condition Eq. \eqref{eq:cond2} defining $B$
	\[
		m_{a,i} \leq (1-2a) n_{a,i} + \sum_{b=1}^{r_{i-1}} \min(n_{b,i-1}, n_{a,i})
	\]
	then the properties in Eqs. \eqref{eq:prop1}, \eqref{eq:prop2}, \eqref{eq:prop3} imply that it can be re-expressed as a linear combination of quasiparticle monomials $b' \succ b$ of the same color type and total index sum, only a finite number of which do not belong to $U(\bar{\fn}_+)\bar{\fn}_+^{\geq 0} \subset \mathcal{I}_k$.
	This is proven exactly as in Theorem 5.1 of \cite{Georgiev} and follows by induction on the color type and index sum.
	
	For the last condition Eq. \eqref{eq:cond3} defining the set $B$, we proceed as in the last paragraph in Section 5.5 of \cite{ButoracKozic}.
	If the color-charge-type of quasiparticle monomial $b$ satisfies the first two conditions Eqs. \eqref{eq:cond1}, \eqref{eq:cond2} and has $n_{a,i} > k$ for some $a,i$, i.e. it does not satisfy Eq. \eqref{eq:cond3}, then the commutation relations of the $x_{n \alpha_j}$ imply that we can bring the term $x_{n_{a,i}}(z)$ in the product $P_{\{n_{a,i}\}}(\{z_{a,i}\}) X_{\{n_{a,i}\}}(\{z_{a,i}\})$ all the way to the right and hence we find that this product belongs to $\mathcal{I}_k$.
	As $b$ is the coefficient of $z_{r_N, N}^{m_{r_N,N}+n_{r_N, N}} ... z_{1, 1}^{m_{1,1}+n_{1, 1}}$ of $X_{\{n_{a,i}\}}(\{z_{a,i}\})$, we can take this same coefficient of $P_{\{n_{a,i}\}}(\{z_{a,i}\}) X_{\{n_{a,i}\}}(\{z_{a,i}\})$ to see that $b$ can be expressed as a linear combination of quasiparticle monomials $b'$ of the same color-charge-type.
	Only a finite number of such $b'$ do not belong to $\mathcal{I}_k$, so we can continue this process to ultimately show that $b$ itself belongs to $\mathcal{I}_k$.
\end{proof}

\bibliography{deformations}
\bibliographystyle{amsalpha}
	
\end{document}